\numberwithin{equation}{section}
\numberwithin{figure}{section}
\numberwithin{table}{section}
\theoremstyle{plain}
\newtheorem{theorem}{Theorem}[section]
\newtheorem{prop}[theorem]{Proposition}
\newtheorem{lemma}[theorem]{Lemma}
\newtheorem{cor}[theorem]{Corollary}
\theoremstyle{definition}
\newtheorem{definition}[theorem]{Definition}
\newtheorem{notation}[theorem]{Notation}
\theoremstyle{remark}
\newtheorem{remark}[theorem]{Remark}
\newtheorem{example}[theorem]{Example}
\newcommand{\ph}{\varphi}
\newcommand{\x}{\times}
\newcommand{\ox}{\otimes}
\renewcommand{\tilde}{\widetilde}
\renewcommand{\bar}{\overline}
\newcommand{\free}[1]{{A^*(#1)}}
\newcommand{\lc}[1]{\prescript{#1\!}{}}
\newcommand{\xra}{\xrightarrow}
\newcommand{\xto}{\xrightarrow}
\def\cF{\mathcal F}
\def\cP{\mathcal P}
\def\cT{\mathcal T}
\def\fB{\mathfrak B}\def\fC{\mathfrak C}
\def\BC{\fB\fC}
\def\tBC{\mathfrak t\fB\fC}
\def\QQ{\mathbb Q}
\def\ZZ{\mathbb Z}
\DeclarePairedDelimiter{\abs}\lvert\rvert
\DeclarePairedDelimiter{\gen}\langle\rangle
\newcommand{\Mark}{\mathord{Mark}}
\newcommand{\FMark}{\mathord{\cF\! Mark}}
\newcommand{\Mob}{\mathord{M\ddot ob}}
\newcommand{\FMob}{\mathord{\cF\! M\ddot ob}}
\tikzset{dot/.style={circle,fill=black,thick,inner sep=0pt,minimum size=1mm,draw}}
\tikzset{arrow/.style={semithick,>=stealth',shorten >=1pt,shorten <=1pt}}
\tikzset{equal/.style={arrow,double distance=2pt}}
\title{On the basis of the Burnside ring of a fusion system}
\author[M. Gelvin]{Matthew Gelvin}
\address{Mathematics and Computer Science  Department, Wesleyan University,
Middletown, CT 06459-0128, U.S.A.}
\email{mgelvin@wesleyan.edu}
\author[S. P. Reeh]{Sune Precht Reeh}
\address{Department of Mathematical Sciences,  University of Copenhagen, Copenhagen, Denmark}
\email{spr@math.ku.dk}
\author[E. Yal\c{c}{\i}n]{Erg\"un Yal\c{c}{\i}n}
\address{Department of Mathematics, Bilkent University,
 06800 Bilkent, Ankara, Turkey}
\email{yalcine@fen.bilkent.edu.tr }
\subjclass[2010]{}
\date{March 24, 2014}
\thanks{The research is supported by the Danish National Research Foundation through the Centre for Symmetry and Deformation (DNRF92). The third author is also partially supported by T\" UB\. ITAK-TBAG/110T712.}
\begin{document}

\begin{abstract}
We consider the Burnside ring $A(\cF)$ of $\cF$-stable $S$-sets for a saturated fusion system $\cF$ defined on a $p$-group $S$. It is shown by S. P. Reeh that the monoid of $\cF$-stable sets is a free commutative monoid with canonical basis $\{\alpha_P\}$.  We give an explicit formula that describes $\alpha_P$ as an $S$-set. In the formula we use a combinatorial concept called broken chains which we introduce to understand inverses of modified M{\" o}bius functions.
\end{abstract}

\maketitle

\section{Introduction}

For a finite group $G$, the Burnside ring $A(G)$ is defined as the Grothendieck ring of the isomorphism classes of $G$-sets with addition given by disjoint union and multiplication by cartesian product. The Burnside ring $A(G)$ is free as an abelian group, with basis given by isomorphism classes of transitive $G$-sets $[G/H]$. In particular the basis elements are in one-to-one correspondence with $G$-conjugacy classes of subgroups of $G$.

One often studies the Burnside ring of a finite group $G$ using the mark homomorphism $\Phi : A(G) \to \ZZ ^{Cl(G)}$, where $Cl(G)$ is the set of $G$-conjugacy classes of subgroups of $G$.  For $K\leq G$, the $K$th coordinate of $\Phi$ is defined by $\Phi_K(X)=|X^K|$ when $X$ is a $G$-set, extended linearly for the rest of $A(G)$.
The ring $\free{G}:= \ZZ ^{Cl(G)}$ is the ring of super class functions $f:Cl(G)\to \ZZ$ with multiplication given by coordinate-wise multiplication. It is called the ghost ring of $G$ and it plays an important role for explaining  $G$-sets using their fixed point data. In particular, it is shown that the mark homomorphism is an injective map with a finite cokernel. This means that using rational coefficients, one can express the idempotent basis of $A^*(G)$ in terms of basis elements $[G/H]$ (see D.~Gluck \cite{Gluck}).

Given a saturated fusion system $\cF$ on a $p$-group $S$, one can define the Burnside ring $A(\cF)$ of the fusion system $\cF$ as a subring of $A(S)$ formed by elements $X\in A(S)$ such that $\Phi _P (X)=\Phi _{\varphi(P)} (X)$ for every morphism $\varphi : P \to S$ in $\cF$. This subring is also the Grothendieck ring of $\cF$-stable $S$-sets (see \eqref{charFstable} for a definition). It is proved by S. P.~Reeh \cite{ReehStableSets} that the monoid of $\cF$-stable $S$-sets is a free commutative monoid with a canonical basis satisfying certain properties.  Our primary interest is to identify the elements of this basis, so we describe it in more detail here.

For every $X\in A(S)$, let $c_Q(X)$ denote the number of $[S/Q]$-orbits in $X$ so that $X=\sum c_Q(X)[S/Q]$, where the sum is taken over the set of $S$-conjugacy classes of subgroups of $S$. For each $\cF$-conjugacy class of subgroups $P$ of $S$, there is a unique (up to $S$-isomorphism) $\cF$-stable set $\alpha_P$ satisfying
\begin{enumerate}
\item $c_Q(\alpha_P)=1$ if $Q$ is fully normalized and $\cF$-conjugate to $P$,
\item $c_Q(\alpha_P)=0$ if $Q$ is fully normalized and not $\cF$-conjugate to $P$.
\end{enumerate}
The set $\{\alpha_P \}$ over all $\cF$-conjugacy classes of subgroups form an (additive) basis for $A(\cF)$ (see Proposition \ref{propDefiningAlphas}).

The main purpose of this paper is to give explicit formulas for the number of fixed points $\abs{(\alpha_P)^Q}$ and for the coefficients $c_Q(\alpha_P)$ of $[S/Q]$-orbits, for the basis element $\alpha_P$. Our first observation is that the matrix of fixed points $\FMark_{Q, P}=\abs{(\alpha_P)^Q}$ can be described using a simple algorithm in linear algebra. We now explain this algorithm.

Let $\Mob=\Mark^{-1}$ denote the inverse matrix of the usual table of marks for $S$. For each $\cF$-conjugacy class of subgroups of $S$, take the sum of the corresponding columns of $\Mob$, obtaining a non-square matrix.  Then, from the set of rows corresponding to an $\cF$-conjugacy class, select one representing a fully $\cF$-normalized subgroup; delete the others.  The resulting matrix $\FMob$ is a square matrix with dimension equal to the number of $\cF$-conjugacy classes of subgroups. Then we observe that the inverse matrix $\FMark:= \FMob^{-1}$ is the matrix of marks for $A(\cF)$. In other words, we prove the following:

\begin{theorem}\label{thmIntroMatrixForAlphas}
Let $\cF$ be a saturated fusion system over a finite $p$-group $S$. Let the square matrix $\FMob$ be constructed as above, with rows and columns corresponding to the $\cF$-conjugacy classes of subgroup in $S$.
Then $\FMob$ is invertible, and the inverse $\FMark:= \FMob^{-1}$ is the matrix of marks for $A(\cF)$, i.e.
\[\FMark_{Q^*,P^*} = \abs*{(\alpha_{P^*})^{Q^*}}.\]
\end{theorem}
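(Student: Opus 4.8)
The plan is to reduce everything to the standard mark-theoretic identity for the group $S$ and then exploit the two defining properties of $\alpha_{P^*}$. Recall that for an arbitrary $S$-set $X$ the fixed-point counts and the orbit coefficients are related by $\Phi_Q(X) = \abs{X^Q} = \sum_P c_P(X)\,\Mark_{Q,P}$, so that inverting the table of marks yields $c_P(X) = \sum_Q \Mob_{P,Q}\,\abs{X^Q}$. This is the single ingredient from the theory of the ordinary Burnside ring that I would invoke; everything else is bookkeeping that matches the construction of $\FMob$ to this formula.

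First I would specialize the identity to $X = \alpha_{P^*}$. Because $\alpha_{P^*}$ is $\cF$-stable, the number $\abs{(\alpha_{P^*})^Q}$ depends only on the $\cF$-conjugacy class of $Q$, so I would group the sum $\sum_Q \Mob_{P,Q}\,\abs{(\alpha_{P^*})^Q}$ according to $\cF$-classes $Q^*$, pulling the common fixed-point value out of each inner sum. The inner sums $\sum_{Q \in Q^*}\Mob_{P,Q}$ are precisely the column sums used to build the non-square matrix in the construction of $\FMob$. Restricting the row index $P$ to the chosen fully $\cF$-normalized representatives $R^*$ then identifies these column sums with the entries $\FMob_{R^*,Q^*}$, and the identity becomes
\[
c_{R^*}(\alpha_{P^*}) = \sum_{Q^*} \FMob_{R^*,Q^*}\,\abs{(\alpha_{P^*})^{Q^*}} = (\FMob \cdot \FMark)_{R^*,P^*},
\]
where $\FMark_{Q^*,P^*} := \abs{(\alpha_{P^*})^{Q^*}}$ and each $Q^*$ is evaluated on its fully normalized representative.

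The key step is then to read off the left-hand side from the defining properties of $\alpha_{P^*}$. Since $R^*$ is a fully normalized subgroup, property (i) gives $c_{R^*}(\alpha_{P^*}) = 1$ when $R^*$ is $\cF$-conjugate to $P^*$---that is, when $R^* = P^*$ as $\cF$-classes---and property (ii) gives $c_{R^*}(\alpha_{P^*}) = 0$ otherwise. Hence $c_{R^*}(\alpha_{P^*}) = \delta_{R^*,P^*}$, so $\FMob \cdot \FMark = I$. As $\FMob$ is square, a right inverse is a two-sided inverse, so $\FMob$ is invertible with $\FMob^{-1} = \FMark$, which is exactly the claimed equality $\FMark_{Q^*,P^*} = \abs{(\alpha_{P^*})^{Q^*}}$.

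I expect the only delicate points to be notational rather than mathematical: keeping straight which index is summed (the construction sums \emph{columns} of $\Mob$, matching the column index $Q$ over which the fixed-point grouping runs) and confirming that evaluating $\abs{(\alpha_{P^*})^{Q^*}}$ on a fully normalized representative is well defined, which is guaranteed by $\cF$-stability. One should also note that $\Mob$ has rational entries, so $\FMob$ is a priori only a rational matrix; the computation nevertheless shows that its inverse $\FMark$ has nonnegative integer entries, being genuine counts of fixed points.
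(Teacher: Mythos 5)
Your proof is correct. It rests on the same two ingredients as the paper's argument---the fact that $\Mob$ recovers orbit coefficients from fixed-point data, and the normalization $c_{R^*}(\alpha_{P^*})=\delta_{R^*,P^*}$ at fully normalized representatives from Proposition \ref{propDefiningAlphas}---but it organizes them in a genuinely different way. The paper works inside the rational ghost ring: it shows that the columns of $\FMob$ are the coordinates of the $\cF$-stable idempotents $e^\cF_P=\sum_{[P']_S\subseteq[P]_\cF}e_{P'}$ with respect to the basis $\{\alpha_{Q^*}\}$ of $A(\cF)\otimes\QQ$ (a step that implicitly requires knowing each $e^\cF_P$ lies in the rational span of the $\alpha_{Q^*}$, i.e.\ that the rational mark homomorphism identifies $A(\cF)\otimes\QQ$ with the $\cF$-stable part of the ghost ring), and then inverts this change-of-basis matrix, reading off fixed points as coefficients with respect to the idempotents. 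You instead apply the orbit-recovery formula directly to the honest $S$-sets $\alpha_{P^*}$, group the sum over $\cF$-conjugacy classes using $\cF$-stability of $\alpha_{P^*}$, and obtain the identity $\FMob\cdot\FMark=I$ outright, with invertibility supplied by the elementary fact that a square matrix over $\QQ$ with a right inverse is invertible. Your route is leaner: it never mentions the idempotent bases and needs nothing about the $\alpha_{Q^*}$ beyond the delta property, not even that they span $A(\cF)$. What the paper's packaging buys in exchange is the explicit interpretation of $\FMob$ and $\FMark$ as the two change-of-basis matrices between $\{\alpha_{Q^*}\}$ and $\{e^\cF_{Q^*}\}$, which is precisely the viewpoint that the M\"obius-function analysis of the later sections builds on.
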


Here $Q^*$ and $P^*$ denote the chosen $\cF$-conjugacy class representatives. This theorem is proved as  Theorem \ref{thmMatrixForAlphas} in the paper. We also give a detailed calculation to illustrate this method (see Example \ref{exMatrixmethod}). This is all done in Section \ref{secFixedpoints}.

In Section \ref{secMobiusInversion}  we look closely at the above matrix method and analyze it using M{\" o}bius inversion. We observe that the entries of $\FMark$, the table of marks for $\cF$, can be explained by a combinatorial formula using a concept called (tethered) broken chains (see Definition \ref{defTetheredBrokenChain}). This formula is proved in Theorem \ref{thmFixedPointBrokenChains}.

In Section \ref{secBrokenChains}, we prove the main theorem of the paper, which gives a formula for the coefficients $c_Q(\alpha_P)$ in the linear combination $\alpha _P =\sum c_Q(\alpha_P) [S/Q]$. As in the case of fixed point orders, here also the formula is given in terms of an alternating sum of the number of broken chains linking $Q$ to $P$ (see Definition \ref{defBrokenChain}). The main theorem of the paper is the following:

\begin{theorem}\label{thmIntroOrbitsBrokenChains}
Let $\cF$ be a saturated fusion system over a finite $p$-group $S$. Let $\BC_\cF (Q,P)$ denote the set of $\cF$-broken chains linking $Q$ to $P$.  Then the number of $[S/Q]$-orbits in each irreducible $\cF$-stable set $\alpha_{P}$, denoted $c_Q(\alpha_P)$, can be calculated as
\[c_Q(\alpha_P) = \frac{\abs{W_S P^*}}{\abs{W_S Q}}\cdot \sum_{\sigma\in \BC_\cF(Q,P)} (-1)^{\ell(\sigma)}\]
for $Q,P$ subgroups of $S$, where $P^*\sim_\cF P$ is fully normalized.
\end{theorem}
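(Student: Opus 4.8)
The plan is to derive the orbit-coefficient formula in Theorem~\ref{thmIntroOrbitsBrokenChains} by combining the fixed-point description of $\alpha_P$ from Theorem~\ref{thmIntroMatrixForAlphas} with standard M\"obius inversion on the poset of subgroups of $S$. The starting point is the relationship between the coefficients $c_Q(X)$ and the fixed-point numbers $\abs{X^Q}$ for an arbitrary $S$-set $X$. For a single orbit one has $\abs{(S/R)^Q} = \abs{\{gR : QgR = gR\}}$, which depends only on whether $Q$ is $S$-subconjugate to $R$; writing this out gives the classical triangular relation $\abs{X^Q} = \sum_{R} c_R(X)\cdot\abs{(S/R)^Q}$, where the sum runs over $S$-conjugacy classes of subgroups. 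Inverting this triangular system over $\ZZ$ expresses $c_Q(X)$ as a sum over $R \geq_S Q$ of $\abs{X^R}$ weighted by the entries of $\Mob = \Mark^{-1}$, the inverse table of marks for $S$.

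First I would apply this inversion to $X = \alpha_P$, using the fact established in Theorem~\ref{thmIntroMatrixForAlphas} that $\abs{(\alpha_P)^R} = \FMark_{R^*,P^*}$, i.e. the fixed points of $\alpha_P$ depend only on the $\cF$-conjugacy class $R^*$ of $R$ and are recorded by the matrix $\FMark = \FMob^{-1}$. Substituting this into the M\"obius inversion formula turns $c_Q(\alpha_P)$ into a double sum: an outer sum over subgroups $R \geq_S Q$ against entries of $\Mob$, and the values $\FMark_{R^*,P^*}$, which themselves are entries of an inverse matrix built by summing columns of $\Mob$ over $\cF$-classes and restricting rows to fully normalized representatives. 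The key algebraic step is to unwind this composite of two matrix inversions. Since $\FMob$ is assembled from $\Mob$ by column-summing over $\cF$-classes and row-selection, I expect the product of $\Mark$ (restricted appropriately) with the column-summed $\Mob$ to collapse, leaving $c_Q(\alpha_P)$ expressed purely in terms of $\Mob$-entries summed along chains in the subgroup poset that alternate between the ordinary subconjugacy order and $\cF$-fusion.

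The combinatorial heart of the argument is then to recognize these alternating chain sums as counting $\cF$-broken chains. A broken chain linking $Q$ to $P$ (Definition~\ref{defBrokenChain}) should be precisely a sequence interleaving genuine subgroup inclusions, contributing the M\"obius function $\mu$ of the subgroup poset, with $\cF$-conjugations that pass between classes; the sign $(-1)^{\ell(\sigma)}$ records the number of proper inclusion steps, matching the sign of the poset M\"obius function. Thus I would prove by induction on chain length, or by a direct expansion of $\FMark = \FMob^{-1}$ as a geometric-type series $\sum_k (I - \FMob)^k$ (valid because $\FMob$ is unitriangular after suitable ordering by $\abs{-}$), that the $(Q,P)$ coefficient equals $\sum_{\sigma \in \BC_\cF(Q,P)} (-1)^{\ell(\sigma)}$ up to the normalizing factor. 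The factor $\abs{W_S P^*}/\abs{W_S Q}$ arises because the M\"obius inversion is most naturally carried out in the basis of idempotents or normalized fixed-point functionals, and converting back to the orbit basis $[S/Q]$ introduces the ratio $\abs{(S/Q)^Q}/\abs{\Aut_S(Q)}$-type index; explicitly, the number of $\cF$-conjugates and the Weyl group orders $\abs{W_S Q} = \abs{N_S(Q)/Q\cdot C}$ bookkeep how many times each $S$-class contributes within its $\cF$-class.

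The step I expect to be the main obstacle is the collapse of the composite inversion into a single chain sum with the correct normalization, i.e. showing cleanly that summing $\Mob$-columns over $\cF$-classes and then inverting produces exactly the weighted broken-chain count rather than some coarser or finer quantity. The difficulty is bookkeeping the passage between $S$-conjugacy classes and $\cF$-conjugacy classes: the row-selection of fully normalized representatives in $\FMob$ breaks the symmetry between rows and columns, so the two matrix inversions do not simply telescope, and one must carefully track the index $\abs{W_S P^*}/\abs{W_S Q}$ that measures the discrepancy between an $\cF$-class and a chosen $S$-class representative within it. I anticipate that making Definition~\ref{defBrokenChain} interface correctly with the M\"obius function of the $S$-subgroup poset — in particular ensuring each alternation of inclusion and fusion in a broken chain is counted with the right multiplicity and sign — will require the bulk of the technical work.
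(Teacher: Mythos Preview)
Your proposal is correct and follows essentially the same route as the paper: invert the mark homomorphism to write $c_Q(\alpha_P)=\sum_{[R]}\Mob_{Q,R}\,\abs{(\alpha_P)^R}$, insert the fixed-point formula $\abs{(\alpha_P)^R}=\abs{W_SP^*}\,\tilde\zeta_\cF(R^*,P^*)$ obtained by expanding $\FMark=\FMob^{-1}$ as a geometric series in the unitriangular matrix $\tilde\mu_\cF$, and then expand every M\"obius value as a signed sum over chains to obtain the broken-chain sum with the factor $\abs{W_SP^*}/\abs{W_SQ}$. The step you flag as the main obstacle---collapsing the composite inversion---is in fact routine once one normalizes $\FMob$ to the unitriangular $\tilde\mu_\cF(Q^*,P^*)=\abs{W_SQ^*}\cdot\FMob_{Q^*,P^*}=\sum_{P'\sim_\cF P^*}\mu(Q^*,P')$; the extra outer M\"obius factor $\mu(Q,R_0)$ simply produces the initial chain $\sigma_0$ (which, unlike the others, need not start at a chosen representative and may be trivial), and this is precisely what distinguishes a broken chain from a tethered one.
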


In the above formula, $\ell(\sigma)$ denotes the length of a broken chain $\sigma =(\sigma_0,\sigma_1,\dotsc,\sigma_k)$ linking $Q$ to $P$ defined as the integer $\ell(\sigma) := k+\abs{\sigma_0}+\dotsb+\abs{\sigma_k}$  (see Definition \ref{defBrokenChain}). This theorem is proved in Section \ref{secBrokenChains} as Theorem \ref{thmOrbitsBrokenChains}. In Example \ref{exBrokenChains}, we illustrate how this combinatorial formula can be used to calculate the coefficients $c_Q(\alpha_P)$ for some subgroups $Q$, $P$ for the fusion system $\cF=\cF_{D_8} (A_6)$.

In Section \ref{secSimplifications}, we prove some simplifications for the formula in Theorem \ref{thmIntroOrbitsBrokenChains}. These simplifications come from observations about broken chains and from properties of M\" obius functions. Then in Section \ref{secApplications} we give an application of our main theorem to characteristic bisets.  Since understanding the characteristic  bisets was one of the motivations for this work, we now say a few more words about this application

Let $S$ be a $p$-group and $\cF$ be a fusion system on $S$ as before. A characteristic biset for the fusion system $\cF$ is an $(S,S)$-biset $\Omega$ satisfying certain properties (see Definition \ref{defCharBiset}). These bisets were first introduced by Linckelmann and Webb, and they play an important role in fusion theory. One of the properties of a characteristic biset is
stability under $\cF$-conjugation, namely for every $\varphi : Q \to S$, the $(Q,S)$-bisets $
\prescript{}{\varphi}\Omega $  and $\prescript{}{Q}\Omega $ are isomorphic. Since each $(S,S)$-biset is a left $(S\times S)$-set, we can convert this stability condition to a stability condition for the fusion system $\cF \times \cF$ on the $p$-group $S\times S$ and consider characteristic bisets as elements in $A(\cF \times \cF)$.

It is shown by M.~Gelvin and S.~P.~Reeh \cite{GelvinReeh} that every characteristic biset includes a unique minimal characteristic biset, denoted by $\Omega_{\min}$. The minimal biset can be described as the basis element $\alpha _{\Delta(S, id)}$ of the fusion system $A(\cF \times \cF)$, where for a morphism $\varphi : Q \to S$ in $\cF$, the subgroup $\Delta (P, \varphi)$ denotes the diagonal subgroup $\{ (\varphi(s),s) \, | \, s\in P \}$ in $S\times S$. Now Theorem \ref{thmIntroOrbitsBrokenChains} can be used to give formulas for the coefficients $c_{\Delta (P, \varphi)} (\Omega _{\min})$. Such formulas are important for various other applications of these bisets (see for example \cite{YalcinUnlu}). Using the new interpretation of these coefficients we were able to give a proof for the statement that
all the stabilizers $\Delta (P, \varphi)$ appearing in $\Omega _{\min}$ must satisfy $P \geq O_p (\cF)$ where $O_p(\cF)$ denotes the largest normal $p$-subgroup of $\cF$. This was originally proved in
\cite{GelvinReeh}*{Proposition 9.11}, the proof we give in Proposition \ref{proApplication} uses broken chains and is much  simpler.

{\bf Acknowledgements:} This work was carried out when the third author was visiting the Centre for Symmetry and Deformation at the University of Copenhagen during the Summer of 2013. He thanks the centre for the financial support which made the visit possible and the director Jesper Grodal for the hospitality that he received during the visit.

\section{Burnside rings for groups and fusion systems}\label{secBurnside}

In this section we recall the Burnside ring of a finite group  $S$  and how to describe its structure in terms of the homomorphism of marks, which embeds the Burnside ring into a suitable ghost ring.
We also recall the Burnside ring of a saturated fusion system $\cF$ on a $p$-group $S$, in the sense of \cite{ReehStableSets}.

Let $S$ be a finite group. We use the letter $S$ instead of $G$ for a finite group since in all the applications of these results the group $S$ will be a $p$-group.  The isomorphism classes of finite $S$-sets form a semiring with disjoint union as addition and cartesian product as multiplication. The Burnside ring of $S$, denoted $A(S)$, is then defined as the additive Grothendieck group of this semiring, and $A(S)$ inherits the multiplication as well. Given a finite $S$-set $X$, we let $[X]$ denote the isomorphism class of $X$ as an element of $A(S)$.
The isomorphism classes $[S/P]$ of transitive $S$-sets form an additive basis for $A(S)$, and two transitive sets $S/P$ and $S/Q$ are isomorphic if and only if the subgroups $P$ and $Q$ are conjugate in $S$.

For each element $X\in A(S)$ we define $c_P(X)$, with $P\leq S$, to be the coefficients when we write $X$ as a linear combination of the basis elements $[S/P]$ in $A(S)$, i.e.
\[X= \sum_{[P]\in Cl(S)} c_P(X) \cdot [S/P],\]
where $Cl(S)$ denotes the set of $S$-conjugacy classes of subgroup in $S$. The resulting maps $c_P\colon A(S) \to \ZZ$ are group homomorphisms, but they are \emph{not} ring homomorphisms.

To describe the multiplication of $A(S)$, it is enough to know the products of basis elements $[S/P]$ and $[S/Q]$. By taking the cartesian product $(S/P)\x (S/Q)$ and considering how it breaks into orbits, one reaches the following double coset formula for the multiplication in $A(S)$:
\begin{equation}\label{eqSingleBurnsideDoubleCoset}
[S/P]\cdot [S/Q] = \sum_{\bar s \in P\backslash S /Q} [S/(P\cap \lc s Q)],
\end{equation}
where $P\backslash S /Q$ is the set of double cosets $PsQ$ with $s\in S$.

Instead of counting orbits, an alternative way of characterising a finite $S$-set is counting the fixed points for each subgroup $P\leq S$. For every $P\leq S$ and $S$-set $X$, we denote the number of $P$-fixed points by $\Phi_{P}(X) := \abs*{X^P}$. This number only depends on $P$ up to $S$-conjugation.
Since we have
\[\abs*{(X \sqcup Y)^P}= \abs*{X^P}+\abs* {Y^P} \quad\text{and}\quad\abs*{(X \x Y)^P}= \abs*{X^P}\cdot\abs*{Y^P}\]
for all $S$-sets $X$ and $Y$, the \emph{fixed point map} $\Phi_{P}$ for $S$-sets extends to a ring homomorphism $\Phi_{P}\colon A(S) \to \ZZ$.
On the basis elements $[S/P]$, the number of fixed points is given by
\begin{equation}\label{eqPhiOnBasis}
\Phi_{Q}([S/P]) = \abs*{(S/P)^Q}= \frac{\abs{N_S(Q,P)}}{\abs P},
\end{equation}
where $N_S(Q,P) = \{s\in S \mid \lc s Q \leq P\}$ is the transporter in $S$ from $Q$ to $P$.
In particular, $\Phi_{Q}([S/P])\neq 0$ if and only if $Q\lesssim_S P$ ($Q$ is subconjugate to $P$).

We have one fixed point homomorphism $\Phi_P$ per conjugacy class of subgroups in $S$, and we combine them into the \emph{homomorphism of marks} \[ \Phi=\Phi^S\colon A(S) \xto{\prod_{[P]} \Phi_{P}} \prod_{[P]\in Cl(S)} \ZZ.\] This ring homomorphism maps $A(S)$ into the product ring $\free{S}:=\prod_{[P]\in Cl(S)} \ZZ$, the so-called \emph{ghost ring} for the Burnside ring $A(S)$.

We think of the elements in the ghost ring $\free{S}$ as superclass functions $Cl(S)\to \ZZ$ defined on the subgroups of $S$ and constant on every $S$-conjugacy class. For an element $\xi\in \free{S}$ we write $\xi (Q)$, with $Q\leq S$, to denote the value of the class function $\xi$ on the $S$-conjugacy class of $Q$. We think of $\xi (Q)$ as the number of $Q$-fixed points for $\xi$, even though $\xi$ might not be the fixed point vector for an actual element of $A(S)$. The ghost ring $\free{S}$ has a natural basis consisting of $e_P$ for each $[P]\in Cl(S)$, where $e_P$ is the class function with value $1$ on the class $[P]$, and $0$ on all the other classes.   The elements $\{e_P \mid [P]\in Cl(S)\}$ are the primitive idempotents of $\free{S}$.

Results by tom Dieck and others show that the mark homomorphism is injective, but not every $\xi \in \free{S}$ is the fixed point vector for an element of $A(S)$. The cokernel of $\Phi$ contains the obstruction to $\xi$ being the fixed point vector of a (virtual) $S$-set, hence we speak of this cokernel as the  \emph{obstruction group} $Obs(S) := \prod_{[P]\in Cl(S)} (\ZZ/\abs{W_S P}\ZZ)$, where $W_S P := N_S P / P$. These statements are combined in the following proposition, the proof of which can be found in \cite{tomDieck}*{Chapter 1}, \cite{Dress}, and \cite{YoshidaSES}.

\begin{prop}\label{propYoshidaGroup}
Let $\Psi=\Psi^S\colon \free{S} \to Obs(S)$ be given by the $[P]$-coordinate functions
\[\Psi_{P}(\xi)  := \sum_{\bar s\in W_S P} \xi \bigl (\gen s P \bigr ) \pmod {\abs{W_S P}}.\]
Then, the following sequence of abelian groups is exact:
\[0\to A(S) \xto{\Phi} \free{S} \xto{\Psi} Obs(S) \to 0.\]
\end{prop}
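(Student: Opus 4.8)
The plan is to prove the three exactness statements separately, using throughout that $A(S)$ and $\free{S}$ are free abelian of the same finite rank $\abs{Cl(S)}$. First I would establish injectivity of $\Phi$ by a triangularity argument: order $Cl(S)$ by a linear refinement of the subconjugacy order $\lesssim_S$. By \eqref{eqPhiOnBasis}, $\Phi_Q([S/P])\neq 0$ forces $Q\lesssim_S P$, and the diagonal entry is $\Phi_P([S/P]) = \abs{W_S P}\neq 0$, so the matrix of marks is triangular with nonzero diagonal. Thus $\Phi$ is injective, and since its determinant is $\pm\prod_{[P]\in Cl(S)}\abs{W_S P}$, the cokernel $\free{S}/\Phi(A(S))$ is finite of this order, which is exactly $\abs{Obs(S)}$. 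This equality of orders is what will let me finish without proving the hardest inclusion directly.

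Next I would check that $\Psi\circ\Phi = 0$. As both maps are additive, it is enough to verify $\Psi_P(\Phi(X)) = 0$ for an arbitrary (actual) $S$-set $X$, where $\Phi(X)(\gen s P) = \abs{X^{\gen s P}}$. The key point is that $W_S P$ acts on the fixed-point set $X^P$ — since $N_S P$ preserves $X^P$ and $P$ acts trivially — and that $X^{\gen s P} = (X^P)^{\bar s}$ for a lift $s$ of $\bar s\in W_S P$. Therefore
\[\sum_{\bar s\in W_S P}\abs{X^{\gen s P}} = \sum_{g\in W_S P}\abs{(X^P)^g},\]
which by the Cauchy--Frobenius (Burnside) orbit-counting lemma equals $\abs{W_S P}$ times the number of $W_S P$-orbits on $X^P$. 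In particular it is divisible by $\abs{W_S P}$, so $\Psi_P(\Phi(X)) = 0$ and hence $\Phi(A(S))\subseteq\ker\Psi$.

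For surjectivity of $\Psi$ I would run a second triangularity argument, now on the idempotent basis $\{e_Q\}$ of $\free{S}$. Here $\Psi_P(e_Q) = \#\{\bar s\in W_S P : \gen s P \sim_S Q\}$; since $\gen s P \geq P$, a nonzero value forces $P\lesssim_S Q$, and on the diagonal the only contribution comes from $\bar s = 1$ (as $\gen s P\sim_S P$ together with $\gen s P\geq P$ forces $\gen s P = P$), giving $\Psi_P(e_P) = 1$. Ordering the classes from largest to smallest subgroup, the resulting system is triangular with unit diagonal modulo each $\abs{W_S P}$, so by back-substitution every element of $Obs(S)$ is attained; thus $\Psi$ is surjective.

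Finally I would deduce exactness at $\free{S}$ by a counting argument rather than a direct inclusion. Surjectivity gives $\abs{\free{S}/\ker\Psi} = \abs{Obs(S)} = \prod_{[P]}\abs{W_S P} = \abs{\free{S}/\Phi(A(S))}$, while the second step gives $\Phi(A(S))\subseteq\ker\Psi$; two finite-index subgroups with one contained in the other and of equal index must coincide, so $\Phi(A(S)) = \ker\Psi$. The main obstacle is precisely this middle exactness: proving $\ker\Psi\subseteq\Phi(A(S))$ by directly exhibiting preimages looks awkward, so the real content is pushed into the determinant of the mark matrix and the surjectivity of $\Psi$, which together force the two subgroups to match.
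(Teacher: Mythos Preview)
The paper does not actually prove this proposition: it states the result and refers the reader to tom Dieck, Dress, and Yoshida for the proof. Your argument is correct and is essentially the standard proof found in those references---triangularity of the mark matrix for injectivity and the cokernel order, the Cauchy--Frobenius lemma applied to the $W_SP$-action on $X^P$ for $\Psi\circ\Phi=0$, triangularity of $\Psi$ on the idempotent basis for surjectivity, and an index count to close the middle exactness. There is nothing to compare against in the paper itself; your write-up would serve as a perfectly good self-contained proof.
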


Note that in the exact sequence above $\Phi$ is a ring homomorphism, but $\Psi$ is just a group homomorphism.

\subsection{The Burnside ring of a saturated fusion system}\label{secBurnsideFusion}

Let $S$ be a finite $p$-group, and suppose that $\cF$ is a saturated fusion system on $S$ (see \cite{AKO} for necessary definitions on fusion systems). We say that a finite $S$-set is $\cF$-stable if the action is unchanged up to isomorphism whenever we act through morphisms of $\cF$. More precisely, if $P\leq S$ is a subgroup and $\ph\colon P\to S$ is a homomorphism in $\cF$, we can consider $X$ as a $P$-set by using $\ph$ to define the action $g.x:=\ph(g)x$ for $g\in P$. We denote the resulting $P$-set by $\prescript{}{P,\ph}X$. In particular when $incl\colon P\to S$ is the inclusion map, $\prescript{}{P,incl}X$ has the usual restriction of the $S$-action to $P$.

Restricting the action of $S$-sets along $\ph$ extends to a ring homomorphism $r_\ph\colon A(S)\to A(P)$, and we let $\prescript{}{P,\ph}X$ denote the image $r_\ph(X)$ for all elements $X\in A(S)$. We say that an element $X\in A(S)$ is \emph{$\cF$-stable} if it satisfies
\begin{equation}\label{charFstable}
\parbox[c]{.9\textwidth}{\emph{$\prescript{}{P,\ph}X=\prescript{}{P,incl}X$ inside $A(P)$, for all $P\leq S$ and homomorphisms $\ph\colon P\to S$ in $\cF$.}}
\end{equation}
The $\cF$-stability condition originally came from considering action maps $S\to \Sigma_X$ into the symmetric group on $X$ that are maps of fusion systems $\cF\to \cF_{\Sigma_X}$.

Alternatively, one can characterize $\cF$-stability in terms of fixed points and the mark homomorphism, and the following three properties are equivalent for all $X\in A(S)$:
\begin{enumerate}
\item\label{itemPhiBurnsideEq}  $X$ is $\cF$-stable.
\item\label{itemPhiStable} $\Phi_{P}(X) = \Phi_{\ph P}(X)$ for all $\ph\in \cF(P,S)$ and $P\leq S$.
\item\label{itemFConjStable} $\Phi_{P}(X) = \Phi_{Q}(X)$ for all pairs $P,Q\leq S$ with $P\sim_{\cF} Q$.
\end{enumerate}
A proof of this claim can be found in \cite{Gelvin}*{Proposition 3.2.3} or \cite{ReehStableSets}. We shall primarily use \ref{itemPhiStable} and \ref{itemFConjStable} to characterize $\cF$-stability.

It follows from property \ref{itemFConjStable} that the $\cF$-stable elements form a subring of $A(S)$. We define the \emph{Burnside ring of $\cF$} to be the subring $A(\cF)\subseteq A(S)$ consisting of all the $\cF$-stable elements. Equivalently, we can consider the actual $S$-sets that are $\cF$-stable: The $\cF$-stable sets form a semiring, and we define $A(\cF)$ to be the Grothendieck group hereof. These two constructions give rise to the same ring $A(\cF)$ -- see \cite{ReehStableSets}.

According to \cite{ReehStableSets}, every $\cF$-stable $S$-set decomposes uniquely (up to $S$-isomorphism) as a disjoint union of irreducible $\cF$-stable sets, where the irreducible $\cF$-stable sets are those that cannot be written as disjoint unions of smaller $\cF$-stable sets. Each irreducible $\cF$-stable set corresponds to an $\cF$-conjugacy class $[P]_\cF = \{Q\leq S \mid \text{$Q$ is isomorphic to $P$ in $\cF$}\}$ of subgroups, and they satisfy the following characterization:

\begin{prop}[\citelist{\cite{ReehStableSets}*{Proposition 4.8 and Theorem A}}]\label{propDefiningAlphas}
Let $\cF$ be a saturated fusion system over $S$.
For each conjugacy class in $\cF$ of subgroups $[P]_\cF$ there is a unique (up to $S$-isomorphism) $\cF$-stable set $\alpha_P$ satisfying
\begin{enumerate}
\item $c_Q(\alpha_P)=1$ if $Q$ is fully normalized and $\cF$-conjugate to $P$,
\item $c_Q(\alpha_P)=0$ if $Q$ is fully normalized and not $\cF$-conjugate to $P$.
\end{enumerate}
The sets $\alpha_P$ form an additive basis for the monoid of all $\cF$-stable $S$-sets. In addition, by construction in \cite{ReehStableSets} the stabilizer of any point in $\alpha_P$ is $\cF$-conjugate to a subgroup of $P$.
\end{prop}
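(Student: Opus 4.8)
The plan is to work entirely inside $A(S)$ through the mark homomorphism, using characterization \ref{itemFConjStable}: an element $X\in A(S)$ is $\cF$-stable precisely when $\Phi_Q(X)$ depends only on the $\cF$-conjugacy class of $Q$. Fix once and for all a fully normalized representative $Q^*$ of each $\cF$-conjugacy class. I would first dispose of uniqueness, which is pure triangularity. Suppose $D\in A(\cF)$ is a virtual $\cF$-stable set with $c_{Q^*}(D)=0$ for every fully normalized $Q^*$; I claim $D=0$. If not, choose $Q_0$ of maximal order among subgroups with $c_{Q_0}(D)\neq 0$. Since $\Phi_{Q_0}([S/R])\neq 0$ forces $Q_0\lesssim_S R$ and hence $\abs{R}\geq\abs{Q_0}$, maximality leaves only the terms with $R\sim_S Q_0$, so $\Phi_{Q_0}(D)=c_{Q_0}(D)\cdot\abs{W_S Q_0}\neq 0$. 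Applying the same computation at a fully normalized $Q_0^*\sim_\cF Q_0$ gives $\Phi_{Q_0^*}(D)=c_{Q_0^*}(D)\cdot\abs{W_S Q_0^*}=0$, whereas $\cF$-stability forces $\Phi_{Q_0^*}(D)=\Phi_{Q_0}(D)$, a contradiction. Hence the additive map $\Theta\colon A(\cF)\to\ZZ^{r}$, where $r$ is the number of $\cF$-conjugacy classes of subgroups, given by $X\mapsto(c_{Q^*}(X))_{Q^*}$, is injective; in particular there is at most one $\alpha_P$ with the stated coefficients.

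For existence I would construct $\alpha_P$ by downward induction on subgroup order, determining the coefficients $c_Q(\alpha_P)$ one order at a time. Let $n=\abs{P^*}$. For $\abs{R}>n$ set $c_R=0$. At order $n$, set $c_Q=\abs{W_S P^*}/\abs{W_S Q}$ for every $Q\sim_\cF P$ (a power of $p$, hence a positive integer, equal to $1$ exactly when $Q$ is itself fully normalized) and $c_Q=0$ for all other subgroups of order $n$; this already makes $\Phi$ constant on each $\cF$-class of order-$n$ subgroups and realizes conditions (i)--(ii) at that order. Proceeding to orders $m<n$, suppose the coefficients on subgroups of order $>m$ are fixed and write $\beta_Q$ for the contribution of those orbits to $\Phi_Q$. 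For each $\cF$-class of order-$m$ subgroups I pin the target fixed-point value at the fully normalized representative by setting $c_{Q^*}=0$, and for every other conjugate $Q'\sim_\cF Q^*$ I am forced to take $c_{Q'}=(\beta_{Q^*}-\beta_{Q'})/\abs{W_S Q'}$ in order to equalize fixed points across the class. Because every orbit introduced at a previous stage has stabilizer $\lesssim_\cF P$, one checks directly that $\beta_Q\neq 0$ only for $Q\lesssim_\cF P$, so the newly added orbits again have stabilizer $\cF$-subconjugate to $P$; this propagates the final stabilizer assertion.

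The hard part is proving that this construction yields an \emph{actual} $S$-set, i.e.\ that each coefficient $c_{Q'}=(\beta_{Q^*}-\beta_{Q'})/\abs{W_S Q'}$ is a nonnegative integer. Integrality can be extracted from the Yoshida congruences of Proposition \ref{propYoshidaGroup} applied to the class function being assembled, but nonnegativity is the genuine obstacle: it amounts to the monotonicity statement that, among the $\cF$-conjugates of $Q$, the fully normalized representative $Q^*$ has the \emph{largest} number of fixed points in the partially built set (equivalently $\beta_{Q^*}\geq\beta_{Q'}$). This reflects the fact that fully normalized subgroups carry the largest Weyl groups, and it is the step I expect to require the most care; it is also the combinatorial content that the broken-chain formulas of this paper later make explicit. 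Granting it, $\Theta(\alpha_P)$ is the standard basis vector $e_{[P]}$, so $\Theta$ restricts to a monoid isomorphism from the $\cF$-stable sets onto $\NN^{r}$. Since the target is free commutative, the $\alpha_P$ are irreducible and form the asserted additive basis.
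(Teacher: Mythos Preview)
The paper does not prove this proposition; it is quoted from \cite{ReehStableSets} (Proposition~4.8 and Theorem~A there), so there is no in-paper argument to compare against. Your uniqueness argument via the triangularity of the mark homomorphism is correct and is essentially the standard one. The outline for existence is also the right shape, and you correctly locate the crux: one must show that $\beta_{Q^*}\geq \beta_{Q'}$ for every $Q'\sim_\cF Q^*$ so that the inductively defined coefficients are nonnegative integers.

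However, you do not actually prove this, and the heuristic you offer---that fully normalized subgroups have the largest Weyl groups---is not sufficient on its own. The quantity $\beta_{Q'}=\Phi_{Q'}(Y)$ depends on how $Q'$ embeds into the stabilizers already present in the partial set $Y$, not merely on $\abs{W_S Q'}$; there is no general principle that a subgroup with larger normalizer has more fixed points in an arbitrary $S$-set. In Reeh's proof the required monotonicity is obtained by a careful induction that uses saturation (the receptivity/extension axiom for fully normalized subgroups) to transport fixed points from $Q'$ to $Q^*$; it is genuinely where the saturation hypothesis enters, and it cannot be replaced by a size comparison of Weyl groups. Your suggestion that the broken-chain formulas later in this paper ``make this explicit'' is also off target: Sections~\ref{secMobiusInversion}--\ref{secBrokenChains} \emph{compute} $c_Q(\alpha_P)$ assuming $\alpha_P$ already exists; they do not independently establish that the resulting numbers are nonnegative, so you cannot defer the gap to them. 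Finally, the integrality step is only gestured at: the congruences of Proposition~\ref{propYoshidaGroup} constrain completed fixed-point vectors of genuine $S$-sets, and it is not immediate how to extract $\beta_{Q^*}\equiv \beta_{Q'}\pmod{\abs{W_S Q'}}$ from them for the partially built object; in the original source this too comes out of the same saturation-based comparison rather than as a separate congruence argument.
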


\section{Fixed point orders of the irreducible $\mathcal F$-stable sets}\label{secFixedpoints}

Let $\Mark$ be the matrix of marks for the Burnside ring of $S$, i.e. the matrix for the mark homomorphism $\Phi\colon A(S) \to \free{S}$
with entries
\[\Mark_{Q,P} = \abs*{(S/P)^Q} =\frac{\abs{N_S(Q,P)}}{\abs P}.\]
The rows and columns of $\Mark$ correspond to the $S$-conjugacy classes $[P]_S\in Cl(S)$ of subgroups in $S$. We order the subgroup classes by increasing order of the subgroups, in particular the trivial group $1$ corresponds to the first row and column, and $S$ itself corresponds to the last row and column. This way $\Mark$ becomes upper triangular.

Over the rational numbers the mark homomorphism $\Phi\colon A(S)\ox \QQ \xra\cong \free{S} \ox \QQ$ is an isomorphism, and we let $\Mob=\Mark^{-1}$ be the inverse rational matrix.

From $\Mob$ we construct a further matrix $\FMob$ as follows: For each $\cF$-conjugacy class of subgroups in $S$ we take the sum of the corresponding columns of $\Mob$ to be the columns of $\FMob$. For each $\cF$-conjugacy class of subgroups we choose a fully normalized representative of the class, and then we delete all rows that do not correspond to one of the chosen representatives. The resulting matrix $\FMob$ is guaranteed to be a square matrix with dimension equal to the number of $\cF$-conjugacy classes of subgroups; the rows and columns correspond to the chosen representatives of the $\cF$-conjugacy classes (see Example \ref{exMatrixmethod}).

For each class $[P]_\cF$ let $P^*$ be the chosen representative. The precise description of the entries $\FMob_{Q^*,P^*}$ in terms of $\Mob_{Q,P}$ is then
\[\FMob_{Q^*,P^*} := \sum_{[P]_S\subseteq [P^*]_\cF} \Mob_{Q^*,P}\ .\]

\begin{theorem}\label{thmMatrixForAlphas}
Let $\cF$ be a saturated fusion system over a finite $p$-group $S$. Let the square matrix $\FMob$ be constructed as above, with rows and columns corresponding to the $\cF$-conjugacy classes of subgroup in $S$.
Then $\FMob$ is invertible, and the inverse $\FMark:= \FMob^{-1}$ is the matrix of marks for $A(\cF)$, i.e.
\[\FMark_{Q^*,P^*} = \abs*{(\alpha_{P^*})^{Q^*}}.\]
\end{theorem}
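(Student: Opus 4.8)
The plan is to prove the statement by exhibiting an explicit right inverse for $\FMob$: let $M$ be the square matrix, indexed by the chosen $\cF$-class representatives, with entries $M_{R^*,P^*} := \abs*{(\alpha_{P^*})^{R^*}}$. Since $\FMob$ and $M$ have the same size (one row and one column per $\cF$-conjugacy class of subgroups), once I verify $\FMob\cdot M = I$ it follows formally that $\FMob$ is invertible with $\FMob^{-1}=M$, which is exactly the claim $\FMark_{Q^*,P^*}=\abs*{(\alpha_{P^*})^{Q^*}}$. So the whole argument reduces to computing the product $\FMob\cdot M$ and recognizing it as the identity.

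The first ingredient is the inverse of the mark homomorphism. Over $\QQ$ the map $\Phi$ is an isomorphism with inverse $\Mob=\Mark^{-1}$, so for every $X\in A(S)$ the coefficients and fixed-point numbers are related by
\[ c_Q(X)=\sum_{R}\Mob_{Q,R}\,\Phi_R(X), \]
the sum running over the $S$-conjugacy classes of subgroups $R\leq S$. I would apply this to $X=\alpha_{P^*}$ and then exploit $\cF$-stability. By the fixed-point characterization of $\cF$-stability (property \ref{itemFConjStable}), $\Phi_R(\alpha_{P^*})=\Phi_{R^*}(\alpha_{P^*})$ where $R^*$ denotes the chosen fully normalized representative of the $\cF$-class of $R$. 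Grouping the sum above by $\cF$-conjugacy classes and substituting the defining formula $\FMob_{Q^*,R^*}=\sum_{[R]_S\subseteq[R^*]_\cF}\Mob_{Q^*,R}$, I obtain, for a representative $Q^*$,
\[ c_{Q^*}(\alpha_{P^*})=\sum_{[R^*]}\FMob_{Q^*,R^*}\,\Phi_{R^*}(\alpha_{P^*})=\sum_{[R^*]}\FMob_{Q^*,R^*}\,M_{R^*,P^*}. \]

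The second ingredient is the normalization of $\alpha_{P^*}$ supplied by Proposition \ref{propDefiningAlphas}. Both $Q^*$ and $P^*$ are fully normalized, and as chosen representatives they are $\cF$-conjugate precisely when they coincide; hence the left-hand side above is $c_{Q^*}(\alpha_{P^*})=\delta_{Q^*,P^*}$. Combining the two displays gives
\[ \delta_{Q^*,P^*}=\sum_{[R^*]}\FMob_{Q^*,R^*}\,M_{R^*,P^*}, \]
i.e. $\FMob\cdot M=I$. As noted, this establishes invertibility of $\FMob$ and identifies $\FMark=\FMob^{-1}=M$, completing the proof.

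I expect the only genuine subtlety to lie in the two reductions just used, namely making sure the column-grouping by $\cF$-classes and the row-restriction to fully normalized representatives are applied consistently. The construction of $\FMob$ is engineered exactly so that these match the structure of $\alpha_{P^*}$: summing the columns of $\Mob$ over each $\cF$-class is what the $\cF$-stability collapse $\Phi_R=\Phi_{R^*}$ calls for, while retaining only the rows of fully normalized subgroups is what lets Proposition \ref{propDefiningAlphas} deliver the Kronecker delta. Verifying that these two features interlock correctly is the heart of the argument; everything surrounding it is the formal inversion of a square matrix.
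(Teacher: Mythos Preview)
Your proof is correct and follows essentially the same logic as the paper's: both use $\Mob$ to convert fixed-point data into orbit counts, collapse by $\cF$-classes via $\cF$-stability, and then invoke Proposition~\ref{propDefiningAlphas} to obtain the Kronecker delta. The only difference is packaging---the paper phrases this as showing that $\FMob$ is the change-of-basis matrix from $\{\alpha_{Q^*}\}$ to the $\cF$-stable idempotents $e_P^\cF=\sum_{[P']_S\subseteq[P]_\cF}e_{P'}$, whereas you verify the matrix identity $\FMob\cdot M=I$ directly---but the underlying computation is the same.
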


\begin{proof}
In the rational ghost ring $\free{S}\ox \QQ = \prod_{[P]_S} \QQ$ the unit vector $e_P$ is the superclass function with value $1$ for the class $[P]$ and value $0$ for the other subgroup classes. We have one unit vector $e_P$ corresponding to each conjugacy class $[P]_S$, and the matrix of marks $\Mark$ expresses the usual basis for $A(S)\ox \QQ$, consisting of the transitive sets $[S/P]$, in terms of the idempotents $e_P$. Conversely, the inverse $\Mob=\Mark^{-1}$ then expresses the idempotents $e_P$ as (rational) linear combinations of the orbits $[S/P]$.

An element $X\in A(S)\ox \QQ$ is $\cF$-stable if the number of fixed points $\abs*{X^Q}$ and $\abs*{X^P}$ are the same for $\cF$-conjugate subgroups $Q\sim_\cF P$, i.e. if the coefficients of $X$ with respect to the idempotents $e_Q$ and $e_P$ are the same for $\cF$-conjugate subgroups.
The $\cF$-stable elements of $\free{\cF}\ox \QQ\leq \free{S}\ox \QQ$ thus have an idempotent basis consisting of
\[e_P^\cF := \sum_{[P']_S \subseteq [P]_\cF} e_{P'}\ ,\]
with a primitive $\cF$-stable idempotent $e^\cF_P$ corresponding to each $\cF$-conjugacy class of subgroups. To express the idempotent $e_P^\cF$ as a linear combination of orbits $[S/P]$, we just have to take the sum of the columns in $\Mob$ associated to the conjugacy class $[P]_\cF$. Hence counting the number of $[S/Q]$-orbits in $e_P^\cF$, we get
\[c_Q (e_P ^\cF )=\sum_{[P']_S\subseteq [P]_\cF} \Mob_{Q,P'}\ .\]

Let $\cP^*$ denote the set of chosen fully normalized representatives for each $\cF$-conjugacy class of subgroups in $S$. For each $P^*\in\cP ^*$, we have an irreducible $\cF$-stable set $\alpha_{P^*}$, and by Proposition \ref{propDefiningAlphas} any linear combination $X$ of the $\{\alpha_{P^*}\}$ can be determined solely by counting the number of $[S/P^*]$-orbits for each $P^*\in \cP^*$.

Because $e^\cF_P$ is $\cF$-stable, it is a (rational) linear combination of the $\{\alpha_{Q^*}\}$. The coefficients of this linear combination, coincide with the number of $[S/Q^*]$-orbits, so to express the idempotents $e^\cF_P$ in terms of the $\{\alpha_{Q^*}\}$ we only care about the rows of $\Mob$ corresponding to $Q^*\in \cP^*$, and ignore all the other rows.
Consequently, $\FMob$ is the matrix that expresses the idempotents $e^\cF_P$ in terms of the $\{\alpha_{Q^*}\}$.

The inverse $\FMark=\FMob^{-1}$ therefore expresses the irreducible $\cF$-stable sets $\alpha_P$ in terms of the $\cF$-stable idempotents $e^\cF_Q$, which exactly reduces to counting the $Q$-fixed points of $\alpha_P$.
\end{proof}

\begin{example}\label{exMatrixmethod}
Let $S=D_8$ be the dihedral group of order 8 and $\cF=\cF_{D_8}(A_6)$ denote the fusion system induced by the finite group $A_6$. Let $\cP$ be the entire subgroup poset of $D_8$ and $\cP/\cF$ the poset of $\cF$-conjugacy classes of subgroups in $D_8$:
\[
\xymatrix{
&&\cP&&&&&\cP/\cF\\
&&D_8&&&&&[D_8]\\
&V_4^1\ar@{-}[ur]&C_4\ar@{-}[u]&V_4^2\ar@{-}[ul]&&&[V_4^1]\ar@{-}[ur]&[C_4]\ar@{-}[u]&[V_4^2]\ar@{-}[ul]\\
C_2^1\ar@{-}[ur]\ar@{~}[r]&{C_2^1}'\ar@{-}[u]\ar@{--}_-\cF[r]&Z\ar@{-}[ul]\ar@{-}[u]\ar@{-}[ur]&{C_2^2}'\ar@{-}[u]\ar@{~}[r]\ar@{--}^-\cF[l]&C_2^2\ar@{-}[ul]&&&[Z]\ar@{-}[ul]\ar@{-}[u]\ar@{-}[ur]&\\
&&1\ar@{-}[ull]\ar@{-}[ul]\ar@{-}[u]\ar@{-}[ur]\ar@{-}[urr]&&&&&[1]\ar@{-}[u]}
\]
where $V_4^*$ is an elementary abelian 2-group of order 4, $C_n^*$ is a cyclic group of order $n$, $Z\cong C_2$ is the center of $D_8$, and square brackets denote the $\cF$-conjugacy class. The horizontal squiggly lines indicate subgroups' being in the same $D_8$-conjugacy class and dashed lines means that they are in the same $\cF$-conjugacy class.

The table of marks $\Mark$ and its inverse $\Mob=\Mark^{-1}$ are given below
\[
\begin{array}{l|rrrrrrrr}
\Mark &1&C_2^1&Z&C_2^2&V_4^1&C_4&V_4^2&D_8\\
\hline
1	&	8&	4&	4&	4&	2&	2&	2&	1\\
C_2^1&	&	2&	0&	0&	2&	0&	0&	1\\
Z	&	&	&	4&	0&	2&	2&	2&	1\\
C_2^2&	&	&	&	2&	0&	0&	2&	1\\
V_4^1&	&	&	&	&	2&	0&	0&	1\\
C_4	&	&	&	&	&	&	2&	0&	1\\
V_4^2&	&	&	&	&	&	&	2&	1\\
D_8&	&	&	&	&	&	&	&	1\\
\end{array}
\]
\[
\begin{array}{l|rrrrrrrr}
 \Mob &1&C_2^1&Z&C_2^2&V_4^1&C_4&V_4^2&D_8\\
\hline
1 &1/8 & -1/4 & -1/8 & -1/4 & 1/4 & 0 & 1/4& 0\\
C_2^1&	&1/ 2 & 0 & 0 & -1/ 2 & 0 & 0 & 0 \\
Z	&	&	&	1/4 &0 & -1/4 & -1/4& -1/4& 1/2\\
C_2^2&	&	&	&	1/2 & 0	&	0 &	-1/2 &	0\\
V_4^1&	&	&	&	&	1/2 &	0&	0&	-1/2\\
C_4	 &	&	&	&	&	&	1/2 & 0 &	-1/2 \\
V_4^2 &	&	&	&	&	&	&	1/2 &	-1/2\\
D_8 &	&	&	&	&	&	&	&	1 \\\end{array}
\]
Below we give the matrix for $\FMob$ and its inverse $\FMark=\FMob ^{-1}$. Recall that the matrix for $\FMob$ is obtained by adding the columns of $\Mob$ for the subgroups which are $\cF$-conjugate, and then by choosing a fully normalized subgroup in every $\cF$-conjugacy class on the rows. Here $Z$ is the unique fully normalized subgroup in its $\cF$-conjugacy class.
\[
\begin{array}{l|rrrrrr}
\FMob &	1&	Z&	V_4^1&	C_4&	V_4^2&	D_8\\
\hline
1&		1/8&	-5/8&	1/4&	0&	1/4&	0\\
Z&		&	1/4&	-1/4&	-1/4&	-1/4&	1/2\\
V_4^1&	&	&	1/2&	0&0	&	-1/2\\
C_4&	&	&	&	1/2&	0&	-1/2\\
V_4^2&	&	&	&	&	1/2&	-1/2\\
D_8&	&	&	&	&	&	1
\end{array}
\]
\[
\begin{array}{l|rrrrrr}
\FMark&	1&	Z&	V_4^1&	C_4&	V_4^2&	D_8\\
\hline
1&		8&	20&	6&	10&	6&	1\\
Z&		&	4&	2&	2&	2&	1\\
V_4^1&	&	&	2&	0&	0&	1\\
C_4&	&	&	&	2&	0&	1\\
V_4^2&	&	&	&	&	2&	1\\
D_8&	&	&	&	&	&	1
\end{array}
\]

From this we obtain the $\Phi(\alpha_P)$ by reading off the columns of $\cF Mark$ (since $e_Z^\cF=e_{C_2^1}+e_Z+e_{C_2^2}$):
\[
\begin{array}{lllll}
\Phi(\alpha_1)&=&8e^\cF_1&=&8e_1\\
\Phi(\alpha_Z)&=&20e^\cF_1+4e^\cF_Z&=&20e_1+4e_{C_2^1}+4e_Z+4e_{C_2^2}\\
\Phi(\alpha_{V_4^1})&=&6e^\cF_1+2e^\cF_Z+2e^\cF_{V_4^1}&=&6e_1+2e_{C_2^1}+2e_Z+2e_{C_2^2}+2e_{V_4^1}\\
\Phi(\alpha_{C_4})&=&10e^\cF_1+2e^\cF_Z+2e^\cF_{C_4}&=&10e_1+2e_{C_2^1}+2e_Z+2e_{C_2^2}+e_{C_4}\\
\Phi(\alpha_{V_4^2})&=&6e^\cF_1+2e^\cF_Z+2e^\cF_{V_4^2}&=&6e_1+2e_{C_2^1}+2e_Z+2e_{C_2^2}+2e_{V_4^2}\\
\Phi(\alpha_{D_8})&=&e^\cF_1+e^\cF_Z+e^\cF_{V_4^1}&=&e_1+e_{C_2^1}+e_Z+e_{C_2^2}\\
&&\phantom{e^\cF_1}+e^\cF_{C_4}+e^\cF_{V_4^2}+e^\cF_{D_8}&&\phantom{e_1}+e_{V_4^1}+e_{C_4}+e_{V_4^2}+e_{D_8}
\end{array}
\]
Finally, applying the matrix \emph{M\"ob} to each of these fixed point vectors yields the $S$-orbit description of the $\alpha_{P}$:
\[
\begin{array}{lll}
\alpha_1&=&[S/1]\\
\alpha_Z&=&[S/Z]+2[S/C_2^1]+2[S/C_2^2]\\
\alpha_{V_4^1}&=&[S/V_4^1]+[S/C_2^2]\\
\alpha_{C_4}&=&[S/C_4]+[S/C_2^1]+[S/C_2^2]\\
\alpha_{V_4^2}&=&[S/V_4^2]+[S/C_2^1]\\
\alpha_{D_8}&=&[S/D_8]
\end{array}
\]
\end{example}

There is an explicit  formula for expressing the idempotent basis $\{ e_Q\}$ in terms of the transitive $S$-set basis $\{[S/P]\}$ using the combinatorics of the subgroup poset, which is often referred as the Gluck's idempotent formula \cite{Gluck}. In  the following two sections we find similar explicit formulas for the coefficients of $\alpha_{P^*}$ with respect to the idempotent basis $\{e_Q\}$ and then with respect to the $S$-set basis $\{[S/P]\}$. For this we need to look at the M{\" o}bius inversion in Gluck's idempotent formula more closely.

\section{Fixed point orders and M\"obius inversion}\label{secMobiusInversion}

In this section we discuss how a more explicit formula can be obtained for fixed point orders of basis elements using M\" obius inversion. We first introduce basic definitions about M\" obius inversion. For more details, we refer the reader to \cite{RotaFoundations}.

Let $\cP$ be a finite poset. The \emph{incidence function of $\cP$} is defined as the function
\[
\zeta_\cP:\cP\times\cP\to\ZZ:(a,b)\mapsto\begin{cases}1 &a\leq b,\\ 0&\textrm{else.}\end{cases}
\]
The \emph{incidence matrix} of $\cP$ is  the $\abs\cP\times\abs\cP$-matrix $(\zeta_\cP)$ with entries  $(\zeta_\cP)_{a,b}=\zeta_\cP(a,b)$. When labelling the rows/columns we respect the partial order of $\cP$, such that $a\leq b$ in $\cP$ implies that the $a$-row/-column precedes the $b$-row/-column. This way the incidence matrix is always upper unitriangular (an upper triangular matrix with all diagonal entries equal to $1$).

\begin{definition}
The \emph{M\"obius function} for a poset $\cP$ is $\mu_\cP:\cP\times\cP\to\QQ$ defined by
\[
\sum_{a\in\cP}\zeta_\cP(x,a)\mu_\cP(a,y)=\delta_{x,y}=\sum_{a\in\cP}\mu_\cP(x,a)\zeta_\cP(a,y)
\]
for all $x,y\in\cP$.  If the corresponding $\abs\cP\times\abs\cP$ \emph{M\"obius matrix} is  $(\mu_\cP)$, we have
$
(\mu_\cP)=(\zeta_\cP)^{-1}.
$
\end{definition}

\begin{lemma}\label{lemMobiusIntegral}
$\mu_\cP(a,b)\in\ZZ$ for all $a,b\in\cP$.
\begin{proof}
By our labelling convention, $(\zeta_\cP)$ is upper unitriangular.  Therefore we can write $(\zeta_\cP)=I+(\eta_\cP)$, where $(\eta_\cP)_{i,j}=1$ when $a_i<a_j$ and vanishes elsewhere.  Then $\eta_\cP$ is strictly upper triangular, and $(\eta_\cP)^{\abs\cP}=0$, so
\[
(\mu_\cP)=(\zeta_\cP)^{-1}=(I+(\eta_\cP))^{-1}=I-(\eta_\cP)+(\eta_\cP)^2-(\eta_\cP)^3+-\ldots+(-1)^{\abs\cP-1}(\eta_\cP)^{\abs\cP-1}
\]
has all integral entries.
\end{proof}
\end{lemma}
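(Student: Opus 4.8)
The plan is to leverage the upper-unitriangular structure that the labelling convention imposes on the incidence matrix, reducing integrality to the nilpotency of a single off-diagonal matrix. Because the rows and columns of $(\zeta_\cP)$ are ordered by a linear extension of $\cP$, every diagonal entry equals $1$ (as $a\leq a$) while every entry strictly below the diagonal vanishes (since $a_i > a_j$ forces $a_i\not\leq a_j$). I would therefore write $(\zeta_\cP)=I+N$, where $N$ is the integer matrix with $N_{i,j}=1$ exactly when $a_i<a_j$ and $N_{i,j}=0$ otherwise; by construction $N$ is strictly upper triangular with entries in $\{0,1\}$.

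The key step is the observation that a strictly upper triangular $\abs\cP\times\abs\cP$ matrix is nilpotent, with $N^{\abs\cP}=0$: if the nonzero entries of a matrix all lie in positions $(i,j)$ with $j-i\geq s$, then each further multiplication by $N$ raises this minimal band distance by at least one, so after $\abs\cP$ factors no entry can survive. With nilpotency established, the Neumann series for $(I+N)^{-1}$ terminates, giving the closed form
\[
(\mu_\cP)=(\zeta_\cP)^{-1}=(I+N)^{-1}=\sum_{k=0}^{\abs\cP-1}(-1)^k N^k.
\]
Since $N$ has integer entries, so does each power $N^k$, and hence so does $(\mu_\cP)$; this is precisely the assertion $\mu_\cP(a,b)\in\ZZ$ for all $a,b\in\cP$.

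There is no serious obstacle in this argument, so the only point genuinely requiring care is the bookkeeping that certifies both the nilpotency of $N$ and that the truncated geometric series really is the two-sided inverse of $I+N$. An equally clean alternative would bypass matrices entirely and argue by induction on the size of the interval $[a,b]=\{c\mid a\leq c\leq b\}$, using the recursion $\mu_\cP(a,a)=1$ together with $\mu_\cP(a,b)=-\sum_{a\leq c<b}\mu_\cP(a,c)$, which follows directly from the defining relation $\sum_a\zeta_\cP(x,a)\mu_\cP(a,y)=\delta_{x,y}$; each value is then a finite integer combination of previously computed integer values. I would present the matrix argument as the main proof, since it makes the termination of the series transparent and ties directly into the upper-triangular normalization already fixed by the labelling convention.
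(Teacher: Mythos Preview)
Your proposal is correct and is essentially identical to the paper's own proof: both write $(\zeta_\cP)=I+N$ with $N$ the strictly upper triangular strict-incidence matrix (the paper calls it $(\eta_\cP)$), invoke nilpotency $N^{\abs\cP}=0$, and read off integrality from the finite alternating sum $\sum_{k=0}^{\abs\cP-1}(-1)^kN^k$. The only difference is cosmetic notation and your aside about the alternative recursive argument, which the paper does not mention.
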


Each of the matrices $(\eta^k_\cP):=(\eta_\cP)^k$ has an interpretation in terms of chains in the poset $\cP$.

\begin{definition}
A \emph{chain of length $k$ in $\cP$} is a totally ordered subset of $k+1$ elements
$
\sigma=\{a_0<a_1<\ldots<a_k\}.
$
Such a chain \emph{links $a_0$ to $a_k$}.

Let $\fC^k_\cP(a,b)$ be the set of chains of length $k$ linking $a$ to $b$, and $\fC^k_\cP$ the set of all chains of length $k$ in $\cP$.  $\fC_\cP^0$ is the set of elements of $\cP$. Similarly, let $\fC_\cP(a,b)$ be the set of all chains linking $a$ to $b$, $\fC_\cP$ the set of all chains in $\cP$, and for any chain $\sigma\in \fC_\cP$ let $\abs{\sigma}$ denote the length of $\sigma$.
\end{definition}

\begin{lemma}\label{lemCountingChains}
$(\eta^k_\cP)_{a,b}=|\fC^k_\cP(a,b)|$.
\end{lemma}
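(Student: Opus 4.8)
The plan is to prove the identity by induction on $k$, using the interpretation of $\eta_\cP$ as the matrix recording strict order relations. For the base case $k=0$ we have $(\eta^0_\cP)=I$, so $(\eta^0_\cP)_{a,b}=\delta_{a,b}$; on the combinatorial side, a chain of length $0$ linking $a$ to $b$ is a singleton $\{a_0\}$ with $a_0=a=b$, of which there is exactly one when $a=b$ and none otherwise, so $|\fC^0_\cP(a,b)|=\delta_{a,b}$ as well. (The case $k=1$ is equally transparent: $(\eta_\cP)_{a,b}=1$ precisely when $a<b$, matching the single chain $\{a<b\}$.)

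For the inductive step, suppose the claim holds for $k$. Expanding the matrix product $\eta^{k+1}_\cP=\eta^k_\cP\cdot\eta_\cP$ entrywise and using that $(\eta_\cP)_{c,b}$ equals $1$ when $c<b$ and $0$ otherwise, I would write
\[
(\eta^{k+1}_\cP)_{a,b}=\sum_{c\in\cP}(\eta^k_\cP)_{a,c}\,(\eta_\cP)_{c,b}=\sum_{c<b}(\eta^k_\cP)_{a,c}=\sum_{c<b}|\fC^k_\cP(a,c)|,
\]
where the last equality is the induction hypothesis. It then remains to identify the right-hand side with $|\fC^{k+1}_\cP(a,b)|$. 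The crux is a bijection: every chain $\{a=a_0<a_1<\dotsb<a_k<a_{k+1}=b\}$ of length $k+1$ linking $a$ to $b$ decomposes uniquely by removing its top element $b$, leaving a chain $\{a_0<\dotsb<a_k\}$ of length $k$ linking $a$ to its penultimate element $c:=a_k$, which necessarily satisfies $c<b$. Conversely, any chain of length $k$ linking $a$ to some $c$ with $c<b$ extends uniquely to a chain of length $k+1$ linking $a$ to $b$ by appending $b$. Summing over the possible penultimate elements $c<b$ gives $|\fC^{k+1}_\cP(a,b)|=\sum_{c<b}|\fC^k_\cP(a,c)|$, completing the induction.

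The only point demanding care—the "main obstacle," such as it is—is verifying that this decomposition is genuinely a bijection rather than an overcount, and in particular that the strictness of the inequalities lines up exactly with the entries of $\eta_\cP$. Because $\eta_\cP$ records \emph{strict} relations (its diagonal vanishes), the penultimate element $c$ is forced to satisfy $c<b$ with $c\neq b$, which is precisely the condition ensuring the appended step is a legitimate strict link and that no chain is counted twice. Once this alignment is observed, the argument is routine.
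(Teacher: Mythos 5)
Your proof is correct, but it is organized differently from the paper's. The paper proves the lemma in one step: it expands the full matrix power by the definition of matrix multiplication,
\[(\eta_\cP^k)_{a,b} = \sum_{a_1,\dotsc,a_{k-1}\in \cP} \eta_\cP(a,a_1)\,\eta_\cP(a_1,a_2) \dotsm \eta_\cP(a_{k-1},b),\]
and observes that each product of $0/1$ entries equals $1$ precisely when the tuple $a<a_1<\dotsb<a_{k-1}<b$ is a strict chain, so the sum counts $\fC^k_\cP(a,b)$ directly. You instead induct on $k$, peeling one factor of $\eta_\cP$ off at a time and matching it with a bijection that removes or appends the top element of a chain. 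The two arguments encode the same combinatorial fact — your induction is essentially the paper's expansion unrolled one step at a time — but they trade off differently: the paper's version is shorter and its "expand the product over all intermediate tuples" technique is reused almost verbatim in the later computations of $\tilde\mu_S$ and $\tilde\zeta_\cF$ (Section 4), where sums over tuples of conjugacy classes appear; your inductive version makes the bookkeeping explicit (in particular the base case $(\eta^0_\cP)=I$ versus singleton chains, which the paper never spells out) and verifies the bijectivity of the chain decomposition carefully, at the cost of more scaffolding for a statement the paper treats as nearly immediate.
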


\begin{proof}
With $a_0:=a$ and $a_k:=b$, the definition of matrix multiplication gives us
\[(\eta_\cP^k)_{a_0,a_k} = \sum_{a_1,\dotsc,a_{k-1}\in \cP} \eta_\cP(a_0,a_1)\cdot \eta_\cP(a_1,a_2) \dotsm \eta_\cP(a_{k-1},a_k).\]
By definition of the incidence function $\eta_\cP$, each factor $\eta_\cP(a_i,a_{i+1})$ is $1$ if $a_i< a_{i+1}$ and zero otherwise. The product $\eta_\cP(a_0,a_1) \dotsm \eta_\cP(a_{k-1},a_k)$ is therefore nonzero and equal to~$1$ precisely when $a_0<a_1<\dotsb<a_k$ is a $k$-chain in $\cP$ linking $a_0$ to $a_k$.
\end{proof}

\begin{prop}
For all $a,b\in \cP$,
\[\mu_\cP(a,b) = \sum_{k=0}^\infty (-1)^k\abs{\fC_\cP^k(a,b)} = \sum_{\sigma\in \fC_\cP(a,b)}(-1)^{\abs\sigma}.\]
\end{prop}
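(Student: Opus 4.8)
The plan is to combine the two preceding lemmas, which together already contain all the content of the statement. Lemma~\ref{lemMobiusIntegral} supplies the terminating matrix identity
\[(\mu_\cP) = I - (\eta_\cP) + (\eta_\cP)^2 - \dotsb + (-1)^{\abs\cP-1}(\eta_\cP)^{\abs\cP-1} = \sum_{k=0}^{\abs\cP-1}(-1)^k(\eta^k_\cP),\]
while Lemma~\ref{lemCountingChains} identifies the individual matrix entries $(\eta^k_\cP)_{a,b}$ with $\abs{\fC^k_\cP(a,b)}$, the number of length-$k$ chains linking $a$ to $b$. The whole proof is then a matter of reading off a single matrix entry and reorganizing a finite sum.

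First I would extract the $(a,b)$ entry of the identity from Lemma~\ref{lemMobiusIntegral} and apply Lemma~\ref{lemCountingChains} termwise, obtaining
\[\mu_\cP(a,b) = \sum_{k=0}^{\abs\cP-1}(-1)^k(\eta^k_\cP)_{a,b} = \sum_{k=0}^{\abs\cP-1}(-1)^k\abs{\fC^k_\cP(a,b)}.\]
Next I would justify raising the upper limit from $\abs\cP-1$ to $\infty$. Since a chain linking $a$ to $b$ is a totally ordered subset of $\cP$, it consists of at most $\abs\cP$ distinct elements and so has length at most $\abs\cP-1$; hence $\fC^k_\cP(a,b) = \emptyset$ for every $k \geq \abs\cP$, and the added terms all vanish. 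This yields the first claimed equality
\[\mu_\cP(a,b) = \sum_{k=0}^\infty (-1)^k\abs{\fC^k_\cP(a,b)}.\]

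Finally, for the second equality I would observe that the set $\fC_\cP(a,b)$ of all chains linking $a$ to $b$ is the disjoint union of the sets $\fC^k_\cP(a,b)$ over $k\geq 0$, and that the sign $(-1)^{\abs\sigma}$ attached to a chain $\sigma$ depends only on its length $\abs\sigma = k$. Grouping the sum $\sum_{\sigma\in\fC_\cP(a,b)}(-1)^{\abs\sigma}$ according to length therefore recovers $\sum_{k}(-1)^k\abs{\fC^k_\cP(a,b)}$, completing the chain of equalities.

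I do not expect any genuine obstacle: the proposition is essentially a bookkeeping reformulation of Lemmas~\ref{lemMobiusIntegral} and~\ref{lemCountingChains}. The only step deserving an explicit word of care is the passage from the terminating Neumann series to an infinite sum over all chain lengths, which rests on the elementary finiteness observation that no chain in $\cP$ can have length $\abs\cP$ or greater.
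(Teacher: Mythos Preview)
Your proposal is correct and matches the paper's own approach exactly: the paper simply states that the proposition is immediate from Lemmas~\ref{lemMobiusIntegral} and~\ref{lemCountingChains} and their proofs, and you have spelled out precisely this combination. The only elaboration you add---justifying the passage from the finite Neumann series to the infinite sum via the vanishing of $\fC^k_\cP(a,b)$ for $k\geq\abs\cP$---is the obvious finiteness observation implicit in the paper's one-line proof.
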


\begin{proof}
Immediate from Lemmas \ref{lemMobiusIntegral} and \ref{lemCountingChains} and their proofs.\end{proof}

\begin{remark} From the formula above it is clear that  the M{\" o}bius function can be expressed as the reduced Euler characteristic of a subposet in $\cP$. More specifically, for $a<b$, let $(a, b)_{\cP}$ denote the poset of all $c\in \cP$ with $a< c < b$. Then $\mu _{\cP} (a,b)$ is equal to the reduced Euler characteristic $\widetilde \chi ((a,b)_{\cP} )$ of the subposet $(a,b)_{\cP}$ for every $a, b\in \cP$ such that $a<b$.
\end{remark}

\subsection{M\"obius functions and fixed points}
On the next few pages we go through the construction of the matrix $\FMark$ in Theorem \ref{thmMatrixForAlphas} again, but this time we follow the calculations in detail using the framework of incidence and M\"obius functions.
For a finite $p$-group $S$, we let $\cP$ be the poset of subgroups ordered by inclusion. This poset has incidence and M\"obius functions $\zeta$ and $\mu$ as described in the previous section.

The matrix of marks $\Mark$ for the Burnside ring of $S$ has entries $\Mark_{[Q],[P]} = \abs{N_S(Q,P)}/\abs{P}$ defined for pairs $[Q]_S$,
 $[P]_S$ of $S$-conjugacy classes of subgroups in $S$. Each column is divisible by the diagonal entry, which is the order of the Weyl group $W_S P=N_S P/P$.  If we divide the $[P]_S$-column by $\abs{W_SP}$, we get
\begin{align*}
\Mark_{[Q],[P]}\cdot \frac 1{\abs {W_S P}} &= \frac{\abs{N_S(Q,P)}}{\abs{N_S P}}= \frac{\abs{\{s\in S \mid \lc sQ \leq P\}}}{\abs{N_S P}} = \frac{\abs{\{s\in S \mid Q \leq P^s\}}}{\abs{N_S P}}\\&= \abs{\{P'\leq S \mid P'\sim_S P\text{ and } Q\leq P'\}}
 = \sum_{P'\sim_S P} \zeta(Q,P').
\end{align*}
We denote this value by $\tilde \zeta_S([Q],[P])$, and we call $\tilde \zeta_S$ the \emph{modified incidence function} for the $S$-conjugacy classes of subgroups. We have $(\tilde\zeta_S)_{[Q],[P]}= \Mark_{[Q],[P]} / \abs{W_S P}$, so the modified incidence matrix $(\tilde\zeta_S)$ is upper unitriangular (see Example \ref{exMobius}).

Inverting the matrix $(\tilde\zeta_S)$, we define $(\tilde\mu_S):=(\tilde\zeta_S)^{-1}$ which gives rise to a \emph{modified M\"obius function} $\tilde\mu_S$ for $S$-conjugacy classes of subgroups. Since $\Mob=\Mark^{-1}$ is the inverse of the matrix of marks, we have $(\tilde\mu_S)_{[Q],[P]} = \abs{W_S Q}\cdot \Mob_{Q,P}$. As $(\tilde\zeta_S)$ is triangular with diagonal entries $1$, we also have $(\tilde\mu_S) = (\tilde\zeta_S)^{-1} = \sum_{k=0}^\infty (-1)^k\cdot((\tilde\zeta_S) - I)^k$ as in the proof of Lemma \ref{lemMobiusIntegral}, which we use to calculate the entries of $(\tilde\mu_S)$:
\begin{align*}
\tilde\mu_S  ([Q],[P])
=& \ \sum_{k=0}^\infty (-1)^k\cdot((\tilde\zeta_S) - I)^k \\ =&\sum_{([R_0],[R_1],\dotsc ,[R_{k}])\in \cT_S}  (-1)^k \tilde\zeta_S([R_0],[R_1])\dotsm\tilde\zeta_S([R_{k-1}],[R_k])
\end{align*}
where $\cT_S$ consists of all tuples $([R_0], [R_1], ..., [R_k])$, for $k\geq 0$, of $S$-conjugacy classes of subgroups $[R_i]\in Cl(S)$ such that $[R_0]=[Q]$, $[R_k]=[P]$, and $\abs {R_0}< \abs{R_1} < \dotsb < \abs {R_k}.$
Since we have
\[\tilde\zeta_S([R_i],[R_j])= \sum_{R'_j\sim_S R_j} \zeta(R_i  ,R'_j) \] for all $i,j$, we obtain that $\tilde\mu_S  ([Q],[P])$ is equal to the sum

\begin{align*}
\sum_{\substack{([R_0],[R_1],\dotsc ,[R_{k}])\in \cT_S \\ R_0=Q}}& \ \sum_{R_1'\sim_S R_1} (-1)^k\zeta(R_0,R_1')\tilde\zeta_S([R_1'],[R_2])\dotsm\tilde\zeta_S([R_{k-1}],[R_k])
\\ \vdots\ \
\\ =\ \sum_{\substack{([R_0],[R_1],\dotsc,[R_{k}])\in \cT_S \\ R_0=Q} }  & \ \sum_{R_1'\sim_S R_1}\sum_{R_2'\sim_S R_2} \dotsi \sum_{R_k'\sim_S R_k} (-1)^k \zeta(R_0,R_1')\zeta(R_1',R_2')\dotsm\zeta(R_{k-1}',R_k') \\ =\ \sum_{\substack{R_0< R_1' < \dotsb < R_k'\\ \text{s.t. } R_0=Q,\ R_k'\sim_S P}} & (-1)^k
 = \sum_{P'\sim_S P}\ \sum_{k=0}^\infty (-1)^k \abs{\fC_\cP^k(Q,P')}
 = \sum_{P'\sim_S P} \mu(Q,P').
\end{align*}

Therefore, the matrix $\Mob$, the inverse of the matrix of marks, has entries
\[\Mob_{[Q],[P]} = \frac1{\abs{W_S Q}} \tilde\mu_S([Q],[P]) = \frac1{\abs{W_S Q}} \sum_{P'\sim_S P} \mu(Q,P').\]
This concludes the part of our investigation concerning only the subgroup structure of $S$, and for the calculations below we include the extra data of a saturated fusion system $\cF$ on $S$.

In order to determine the number of fixed points $\abs{(\alpha_P)^Q}$ as in Theorem \ref{thmMatrixForAlphas}, we wish to calculate the $\cF$-analogs of $\Mark$ and $\Mob$ above. To do this, we first choose a fully normalized representative $P^*$ for each $\cF$-conjugacy class $[P]_\cF$ of subgroups, and as before let $\cP^*$ be the collection of these representatives.
Recall that the matrix $\FMob$ is constructed from $\Mob$ by picking out the rows corresponding to $Q^*\in \cP^*$, and the column in $\FMob$ corresponding to $P^*\in \cP^*$ is the sum of the columns in $\Mob$ corresponding to $[P]_S$ with $P\sim_\cF P^*$. More explicitly, we have
\[\FMob_{Q^*,P^*} := \sum_{[P]_S\subseteq [P^*]_\cF} \Mob_{[Q^*],[P]} = \frac1{\abs{W_S Q^*}}\sum_{P\sim_\cF P^*} \mu(Q^*,P).\]
We define the \emph{modified M\"obius function} $\tilde\mu_\cF\colon \cP^*\x \cP^* \to \ZZ$ for the (representatives of) $\cF$-conjugacy classes of subgroups, to be
\[\tilde\mu_\cF(Q^*,P^*) := \abs{W_S Q^*}\cdot \FMob_{Q^*,P^*} = \sum_{P\sim_\cF P^*} \mu(Q^*,P),\]
summing the usual M\"obius function. The associated matrix $(\tilde\mu_\cF)$ is then upper unitriangular.

The modified incidence matrix for $\cF$ is defined as the inverse $(\tilde\zeta_\cF):=(\tilde\mu_\cF)^{-1}$, with the associated function $\tilde\zeta_\cF\colon \cP^*\x \cP^*\to \ZZ$.
By Theorem \ref{thmMatrixForAlphas} we then have
\[\abs{(\alpha_P)^Q} = \FMark_{Q^*,P^*} = \abs{W_S P^*}\cdot \tilde\zeta_\cF(Q^*,P^*)\]
where $\FMark:= \FMob^{-1}$. Recall that for each subgroup $R\leq S$, we denote by $R^*$ the chosen fully normalized representative for the $\cF$-conjugacy class of $R$. As previously, the fact that $(\tilde\mu_\cF)$ is unitriangular implies
\begin{align*}
\tilde\zeta_\cF(Q^*,P^*) =& \sum_{k=0}^\infty (-1)^k\cdot((\tilde\mu_\cF) - I)^k \notag
\\ =\ & \sum_{(R_0^*,R_1^*,\dotsc, R_k^*)\in \cT_{\cF}}   (-1)^k\tilde\mu_\cF(R_0^*,R_1^*)\dotsm\tilde\mu_\cF(R_{k-1}^*,R_k^*) \notag
\end{align*}
where $\cT_\cF$ consists of all tuples $(R_0^*,R_1^*,\dotsc, R_k^*)$, for all $k\geq 0$, of $\cF$-conjugacy class representatives $R_i^* \in \cP^*$ such that $R_0^*=Q^*$, $R_k^*=P^*$, and $\abs{R_0^*}<\abs{R_1^*}<\dotsb <\abs{R_k^*}$. Since we have \[\tilde\mu_\cF(R_i^*,R_j^*) = \sum_{R_j\sim_\cF R_j^*} \mu(R_i^*,R_j),\]
for all $R_i^*,R_j^*\in \cP^*$, we obtain that

\begin{align}
\tilde\zeta_\cF(Q^*,P^*)=\sum_{(R_0^*,R_1^*,\dotsc, R_k^*)\in \cT_{\cF} } &\ \sum_{R_1\sim_\cF R_1^*} (-1)^k\mu(R_0^*,R_1)\tilde\mu_\cF(R_1^*,R_2^*)\dotsm\tilde\mu_\cF(R_{k-1}^*,R_k^*) \notag
\\\vdots\ \  \notag
\\ =\  \sum_{\substack{(R_0^*, R_1 ^*, \dotsc, R_k ^* )\in \cT_{\cF}, \\  R_1,\dotsc ,R_k\in \cP  \text{ s.t. } R_i\sim_\cF R_i^*}} & \ (-1)^k\mu(R_0^*,R_1)\mu(R_1^*,R_2)\dotsm\mu(R_{k-1}^*,R_k) \label{eqFixedPointMobiusChain}
\\ =\ \sum_{\substack{(R_0^*, R_1 ^*, \dotsc, R_k ^* )\in \cT_{\cF}, \\  R_1,\dotsc ,R_k\in \cP  \text{ s.t. } R_i\sim_\cF R_i^*}}
& \ \sum_{\substack{\sigma_i\in \fC_\cP(R_{i-1}^*,R_i)\\\text{for $1\leq i\leq k$}}}(-1)^{k+\abs{\sigma_1}+\dotsb+\abs{\sigma_k}}. \notag
\end{align}

To calculate $\tilde\zeta_\cF(Q^*,P^*)$ we hence have to count sequences of chains $(\sigma_1,\dotsc,\sigma_k)$ such that the end $R_i$ of $\sigma_i$ is $\cF$-conjugate to the start $R_i^*$ of $\sigma_{i+1}$, and the first chain $\sigma_1$ has to start at $Q^*$ while the final chain $\sigma_k$ only has to end at $P^*$ up to $\cF$-conjugation. We give these sequences a name:

\begin{definition}\label{defTetheredBrokenChain}
A \emph{tethered $\cF$-broken chain in $\cP$ linking $Q^*\in \cP^*$ to $P\in \cP$} is a sequence of chains $(\sigma_1,\dotsc,\sigma_k)$ in $\cP$ subject to the following requirements. With each chain written as $\sigma_i=(a^i_0, \dotsc,a^i_{n_i})$ they must satisfy
\begin{itemize}
\item $a^i_{n_i}\sim_\cF a^{i+1}_0$ for all $1\leq i\leq k-1$, so the endpoints of the chains fit together up to conjugation in $\cF$.
\item $a^i_0\in \cP^*$ for all $1\leq i\leq k$. Every chain starts at one of the chosen representatives.
\item $\abs{\sigma_i}=n_i>0$, for all $1\leq i\leq k$.
\item $a^1_0 = Q^*$ and $a^k_{n_k}\sim_\cF P$.
\end{itemize}
If $Q^*\sim_\cF P$, we allow the trivial broken chain with $k=0$.
Let $\tBC_\cF (Q^*,P)$ be the set of tethered $\cF$-broken chains linking $Q^*$ to $P$. The \emph{total length} of a tethered broken chain $\sigma=(\sigma_1,\dotsc,\sigma_k)$ is defined  to be
\[\ell(\sigma) := k+\abs{\sigma_1}+\dotsb+\abs{\sigma_k}.\]
\end{definition}

We visualize a tethered broken chain as a zigzag diagram in the following way:
\[
\begin{tikzpicture}
\matrix (M) [matrix of math nodes, column sep=.5cm, row sep=.5cm] {
a^1_0 & \dotsb & a^1_{n_1} &&&[-.8cm] &[-.8cm] && \\
&& a^2_0 &\dotsb &a^2_{n_2} &&&& \\[-.3cm]
&&&&\phantom{a^3_0}& \ddots &\phantom{a^p_q}&&\\[-.3cm]
&&&&&& a^k_0 & \dotsb & a^k_{n_k} \\
};
\path[sloped]
(M-1-1) -- node{$<$} (M-1-2) -- node{$<$} (M-1-3) -- node{$\sim$} (M-2-3) -- node{$<$} (M-2-4) -- node{$<$} (M-2-5) -- node{$\sim$} (M-3-5)
(M-3-7) -- node{$\sim$} (M-4-7) -- node{$<$} (M-4-8) -- node{$<$} (M-4-9)
;
\end{tikzpicture}
\]
The total length of the tethered broken chain is then the total number of $<$ and $\sim$ signs plus $1$. The added $1$ can be viewed as an additional hidden $Q\sim_{\cF} Q^*$ in front of the broken chain, and this interpretation matches the description, in Remark \ref{rmkTetheredIsASpecialCase} below, of tethered broken chains as a special case of the \emph{broken chains} defined in Section \ref{secBrokenChains}.

With the terminology of tethered broken chains, the calculations above translate to the following statements:
\begin{prop}
The modified incidence function $\tilde\zeta_\cF$ for a saturated fusion system $\cF$, can be calculated as
\[\tilde\zeta_\cF(Q^*,P^*) = \sum_{\sigma\in \tBC_\cF(Q^*,P^*)} (-1)^{\ell(\sigma)} =\sum_{(\sigma_1,\dotsc,\sigma_k)\in \tBC_\cF(Q^*,P^*)} (-1)^{k+\abs{\sigma_1}+\dotsb+\abs{\sigma_k}}\]
for all fully normalized representatives $Q^*, P^*\in \cP^*$.
\end{prop}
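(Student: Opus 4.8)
The plan is to recognize that this proposition is simply a repackaging of the chain of equalities already carried out, culminating in \eqref{eqFixedPointMobiusChain}; no new computation is needed, only an identification of indexing sets. Expanding $\tilde\zeta_\cF = (\tilde\mu_\cF)^{-1}$ through the series $\sum_k (-1)^k ((\tilde\mu_\cF) - I)^k$ --- legitimate because $(\tilde\mu_\cF)$ is upper unitriangular, exactly as in Lemma \ref{lemMobiusIntegral} --- and then substituting $\tilde\mu_\cF(R_i^*, R_j^*) = \sum_{R_j \sim_\cF R_j^*} \mu(R_i^*, R_j)$ and writing each $\mu(R_{i-1}^*, R_i)$ as an alternating count of poset chains, one arrives at the final line of \eqref{eqFixedPointMobiusChain}: a sum over tuples $(R_0^*, \dotsc, R_k^*) \in \cT_\cF$, subgroups $R_i \sim_\cF R_i^*$, and chains $\sigma_i \in \fC_\cP(R_{i-1}^*, R_i)$, weighted by $(-1)^{k + \abs{\sigma_1} + \dotsb + \abs{\sigma_k}}$.

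The core of the argument is then to exhibit a bijection between this index set and $\tBC_\cF(Q^*, P^*)$, sending $(\sigma_1, \dotsc, \sigma_k)$ to itself after reading off $a_0^i = R_{i-1}^*$ and $a_{n_i}^i = R_i$. I would check the four conditions of Definition \ref{defTetheredBrokenChain} directly: each $\sigma_i$ begins at the representative $R_{i-1}^* \in \cP^*$; consecutive chains glue up to $\cF$-conjugation since the terminal vertex $R_i$ of $\sigma_i$ is $\cF$-conjugate to $R_i^* = a_0^{i+1}$; the first chain starts at $Q^*$ and the last ends at $R_k \sim_\cF P^*$. The one condition needing a word is $\abs{\sigma_i} = n_i > 0$: the term $\mu(R_{i-1}^*, R_i)$ vanishes unless $R_{i-1}^* \le R_i$, while the strict order inequalities built into $\cT_\cF$ force $\abs{R_{i-1}^*} < \abs{R_i^*} = \abs{R_i}$, whence $R_{i-1}^* \ne R_i$ and every surviving chain has positive length. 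Conversely, a tethered broken chain recovers its tuple by $R_0^* = Q^*$, $R_i = a_{n_i}^i$, $R_i^* = a_0^{i+1}$ (and $R_k^* = P^*$); here I would note that positivity of the individual chain lengths, together with $\cF$-conjugation preserving subgroup order, is exactly what guarantees $\abs{R_0^*} < \dotsb < \abs{R_k^*}$, so the tuple genuinely lies in $\cT_\cF$.

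With the bijection established, the exponent $k + \abs{\sigma_1} + \dotsb + \abs{\sigma_k}$ is by definition $\ell(\sigma)$, so the two sums agree term by term, giving both displayed equalities. The degenerate case $k = 0$ is handled by noting that distinct chosen representatives are never $\cF$-conjugate, so the identity term of the series contributes $(-1)^0 = 1$ precisely when $Q^* = P^*$, matching the trivial broken chain that Definition \ref{defTetheredBrokenChain} permits when $Q^* \sim_\cF P^*$. Since the substantive work is already finished in the lines preceding the statement, the only genuine obstacle is combinatorial hygiene: correctly distinguishing which endpoints coincide from which are merely $\cF$-conjugate, and confirming positivity of the chain lengths in both directions of the bijection.
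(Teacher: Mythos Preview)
Your proposal is correct and matches the paper's approach exactly: the paper presents this proposition as an immediate reformulation of the preceding calculation \eqref{eqFixedPointMobiusChain} in the language of Definition~\ref{defTetheredBrokenChain}, with no separate proof given. You have simply spelled out the bijection between the indexing data and tethered broken chains (and the $k=0$ edge case) in more detail than the paper does, which is fine and entirely in the intended spirit.
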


We now state the main result of this section.

\begin{theorem}\label{thmFixedPointBrokenChains}
Let $\cF$ be a saturated fusion system over a finite $p$-group $S$, and let $\cP^*$ be a set of fully normalized representatives for the $\cF$-conjugacy classes of subgroups in $S$. Let  $\tBC_\cF(Q^*,P^*)$ denote the set of all tethered $\cF$-broken chains linking $Q^*$ to $P^*$.
Then the numbers of fixed points for the irreducible $\cF$-stable sets $\alpha_{P^*}$, $P^*\in \cP^*$, can be calculated as
\[\abs*{(\alpha_{P^*})^{Q^*}} = \abs{W_S P^*}\cdot \sum_{\sigma\in \tBC_\cF(Q^*,P^*)} (-1)^{\ell(\sigma)}\]
for $Q^*,P^*\in \cP^*$.
\end{theorem}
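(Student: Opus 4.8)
The plan is to assemble the theorem from three facts already in place before its statement: the identification of fixed point numbers with the entries of $\FMark$ in Theorem~\ref{thmMatrixForAlphas}, the normalization relating $\FMob$ to the modified M\"obius matrix $(\tilde\mu_\cF)$, and the combinatorial expansion of $\tilde\zeta_\cF$ in terms of tethered broken chains established in the Proposition immediately preceding. Concretely, I would string these together into a short derivation and then check that the Weyl-group weights land on the correct index.

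First I would recall that by Theorem~\ref{thmMatrixForAlphas} the numbers $\abs{(\alpha_{P^*})^{Q^*}}$ are exactly the entries of $\FMark=\FMob^{-1}$, indexed by the chosen fully normalized representatives $Q^*,P^*\in\cP^*$, so it suffices to express these entries as the alternating sum over tethered broken chains. Next I would account for the diagonal rescaling separating $\FMob$ from the modified M\"obius matrix. By definition $\tilde\mu_\cF(Q^*,P^*)=\abs{W_S Q^*}\cdot\FMob_{Q^*,P^*}$, so writing $D$ for the diagonal matrix with entries $D_{Q^*,Q^*}=\abs{W_S Q^*}$ we have $(\tilde\mu_\cF)=D\cdot\FMob$. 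Inverting gives $\FMark=\FMob^{-1}=(\tilde\mu_\cF)^{-1}D=(\tilde\zeta_\cF)\cdot D$, and reading off the $(Q^*,P^*)$-entry yields
\[\abs{(\alpha_{P^*})^{Q^*}}=\FMark_{Q^*,P^*}=\abs{W_S P^*}\cdot\tilde\zeta_\cF(Q^*,P^*),\]
which is precisely the identity already derived in the text just above.

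Finally I would substitute the combinatorial description $\tilde\zeta_\cF(Q^*,P^*)=\sum_{\sigma\in\tBC_\cF(Q^*,P^*)}(-1)^{\ell(\sigma)}$ supplied by the preceding Proposition, giving the claimed formula. The genuinely substantive work has already been done before the statement: expanding the inverse $(\tilde\zeta_\cF)=(\tilde\mu_\cF)^{-1}$ as the series $\sum_k(-1)^k\bigl((\tilde\mu_\cF)-I\bigr)^k$, unfolding each factor via $\tilde\mu_\cF(R_i^*,R_j^*)=\sum_{R_j\sim_\cF R_j^*}\mu(R_i^*,R_j)$, rewriting each ordinary M\"obius value as an alternating count of chains, and regrouping the concatenated chains into the objects of Definition~\ref{defTetheredBrokenChain}. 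Consequently I do not expect any real obstacle here; the only point requiring care is the bookkeeping of the weights, namely ensuring that the factor $\abs{W_S P^*}$ attaches to the column index $P^*$ and not the row index $Q^*$, a subtlety that the matrix identity $\FMark=(\tilde\zeta_\cF)\cdot D$ makes transparent and which the reader can verify directly on the worked $D_8$ example.
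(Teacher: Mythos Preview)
Your proposal is correct and follows exactly the paper's approach: the proof in the paper is the one-liner ``Immediate from the proposition since $\abs{(\alpha_{P^*})^{Q^*}} = \abs{W_S P^*}\cdot \tilde\zeta_\cF(Q^*,P^*)$,'' which is precisely the combination of the diagonal rescaling identity and the preceding Proposition that you spell out. Your additional care in verifying via $\FMark=(\tilde\zeta_\cF)\cdot D$ that the Weyl-group factor attaches to $P^*$ rather than $Q^*$ is a welcome sanity check, but the argument is otherwise identical.
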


\begin{proof}
Immediate from the proposition since $\abs{(\alpha_{P^*})^{Q_*}} = \abs{W_S P^*}\cdot \tilde\zeta_\cF(Q^*,P^*)$.
\end{proof}

\begin{example}\label{exMobius}

Let $S=D_8$ and $\cF=\cF _{S}(A_6)$ as before. The incidence matrix $\zeta_\cP$ and the M\" obius matrix $\mu _\cP$ are given as follows.
\[
\begin{array}{l|rrrrrrrrrr}
\zeta_\cP&1&C_2^1&{C_2^1}'&Z&{C_2^2}'&C_2^2&V_4^1&C_4&V_4^2&D_8\\
\hline
1	&	1&	1&	1&	1&	1&	1&	1&	1&	1&	1\\
C_2^1&	&	1&	&	&	&	&	1&	&	&	1\\
{C_2^1}'&	&	&	1&	&	&	&	1&	&	&	1\\
Z	&	&	&	&	1&	&	&	1&	1&	1&	1\\
{C_2^2}'&	&	&	&	&	1&	&	&	&	1&	1\\
C_2^2&	&	&	&	&	&	1&	&	&	1&	1\\
V_4^1&	&	&	&	&	&	&	1&	&	&	1\\
C_4	&	&	&	&	&	&	&	&	1&	&	1\\
V_4^2&	&	&	&	&	&	&	&	&1	&	1\\
D_8&	&	&	&	&	&	&	&	&	&	1
\end{array}
\]
\[
\begin{array}{l|r:rr:r:rr:r:r:r:r}
\mu_\cP&1&C_2^1&{C_2^1}'&Z&{C_2^2}'&C_2^2&V_4^1&C_4&V_4^2&D_8\\
\hline
\underline{1}	&	1&	-1&	-1&	-1&	-1&	-1&	2&	0&	2&	0\\
\hdashline
\underline{C}_2^1&	&	1&	&	&	&	&	-1&	&	&	0\\
{C_2^1}'&	&	&	1&	&	&	&	-1&	&	&	0\\
\hdashline
\underline{Z}	&	&	&	&	1&	&	&	-1&	-1&	-1&	2\\
\hdashline
{C_2^2}'&	&	&	&	&	1&	&	&	&	-1&	0\\
\underline{C}_2^2&	&	&	&	&	&	1&	&	&	-1&	0\\
\hdashline
\underline{V}_4^1&	&	&	&	&	&	&	1&	&	&	-1\\
\hdashline
\underline{C}_4	&	&	&	&	&	&	&	&	1&	&	-1\\
\hdashline
\underline{V}_4^2&	&	&	&	&	&	&	&	&1	&	-1\\
\hdashline
\underline{D}_8&	&	&	&	&	&	&	&	&	&	1
\end{array}
\]
Below we see the matrices for $\widetilde \mu _S$ and $\widetilde \zeta_S$ obtained by summing over the columns of subgroups belonging to the same $S$-conjugacy class and choosing an $S$-conjugacy class representative on the rows.
\[
\begin{array}{l|rrrrrrrr}
\widetilde\mu_S&1&C_2^1&Z&C_2^2&V_4^1&C_4&V_4^2&D_8\\
\hline
1	&	1&	-2&	-1&	-2&	2&	0&	2&	0\\
C_2^1&	&	1&	0&0	&	-1&0	&0	&	0\\
Z	&	&	&	1&0	&	-1&	-1&	-1&	2\\
C_2^2&	&	&	&	1&0	&0	&	-1&	0\\
V_4^1&	&	&	&	&	1&0	&0	&	-1\\
C_4	&	&	&	&	&	&	1&0	&	-1\\
V_4^2&	&	&	&	&	&	&1	&	-1\\
D_8&	&	&	&	&	&	&	&	1
\end{array}
\qquad
\begin{array}{l|rrrrrrrr}
\widetilde\zeta_S&1&C_2^1&Z&C_2^2&V_4^1&C_4&V_4^2&D_8\\
\hline
1	&	1&	2&	1&	2&	1&	1&	1&	1\\
C_2^1&	&	1&	0&	0&	1&	0&	0&	1\\
Z	&	&	&	1&	0&	1&	1&	1&	1\\
C_2^2&	&	&	&	1&	0&	0&	1&	1\\
V_4^1&	&	&	&	&	1&	0&	0&	1\\
C_4	&	&	&	&	&	&	1&	0&	1\\
V_4^2&	&	&	&	&	&	&1	&	1\\
D_8&	&	&	&	&	&	&	&	1
\end{array}
\]
\[
\begin{array}{l|rrrrrrrr}
\mathbf{W}_{D_8}&1&C_2^1&Z&C_2^2&V_4^1&C_4&V_4^2&D_8\\
\hline
1	&	8&	&	&	&	&	&	&	\\
C_2^1&	&	2&	&	&	&	&	&	\\
Z	&	&	&	4&	&	&	&	&	\\
C_2^2&	&	&	&	2&	&	&	&	\\
V_4^1&	&	&	&	&	2&	&	&	\\
C_4	&	&	&	&	&	&	2&	&	\\
V_4^2&	&	&	&	&	&	&	2&	\\
D_8&	&	&	&	&	&	&	&	1
\end{array}
\qquad
\begin{array}{l|rrrrrrrr}
\mathbf{m}_{D_8}&1&C_2^1&Z&C_2^2&V_4^1&C_4&V_4^2&D_8\\
\hline
1	&	8&	4&	4&	4&	2&	2&	2&	1\\
C_2^1&	&	2&	0&	0&	2&	0&	0&	1\\
Z	&	&	&	4&	0&	2&	2&	2&	1\\
C_2^2&	&	&	&	2&	0&	0&	2&	1\\
V_4^1&	&	&	&	&	2&	0&	0&	1\\
C_4	&	&	&	&	&	&	2&	0&	1\\
V_4^2&	&	&	&	&	&	&	2&	1\\
D_8&	&	&	&	&	&	&	&	1
\end{array}
\]
The last two matrices above are the diagonal matrix $\mathbf{W}_{D_8}$ with entries $(\mathbf {W}_{D_8})_{[P],[P]}=\abs {W_S (P)}$, and  the matrix $\mathbf{m}_{D_8}= \widetilde \zeta _S \cdot \mathbf{W} _{D_8}$ which  is the same as matrix of the mark homomorphism $\Mark$. So we also have $\Mob =\mathbf{W}_{D_8} ^{-1}\cdot \widetilde \mu_S$.

Now let $\widetilde \mu _{\cF}$ be the matrix obtained by summing columns of $\widetilde \mu _S$ over the $\cF$ conjugacy classes and picking fully normalized representatives for the rows. Let $\widetilde \zeta _{\cF}=(\widetilde \mu _\cF)^{-1}$.
\[
\begin{array}{l|rrrrrr}
\widetilde\mu_\cF&	1&	Z&	V_4^1&	C_4&	V_4^2&D_8\\\hline
1&		1&	-5&	2&	0&	2&	0\\
Z&		&	1&	-1&	-1&	-1&	2\\
V_4^1&	&	&	1&0	&0	&	-1\\
C_4&	&	&	&	1&0	&	-1\\
V_4^2&	&	&	&	&	1&	-1\\
D_8&	&	&	&	&	&	1
\end{array}
\qquad
\begin{array}{l|rrrrrr}
\widetilde\zeta_\cF&	1&	Z&	V_4^1&	C_4&	V_4^2&	D_8\\
\hline
1&		1&	5&	3&	5&	3&	1\\
Z&		&	1&	1&	1&	1&	1\\
V_4^1&	&	&	1&	0&	0&	1\\
C_4&	&	&	&	1&	0&	1\\
V_4^2&	&	&	&	&	1&	1\\
D_8&	&	&	&	&	&	1
\end{array}
\]
From the definition of $\FMob$, it is easy to see that $\FMob =\mathbf{W}_\cF ^{-1}\cdot \widetilde \mu _\cF $  and $\FMark=\FMob ^{-1}= \widetilde \zeta _\cF \cdot \mathbf{W} _\cF$ where
$\mathbf{W}_\cF$ is the diagonal matrix with entries $({\mathbf W}_\cF)_{P^*,P^*}=\abs {W_S (P^*)}$ for all $P^*\in \cP^*$. Theorem \ref{thmFixedPointBrokenChains} says that we can calculate the entries of the matrix $\widetilde \zeta _\cF$ by counting the number of tethered broken chains. For example, $\widetilde \zeta _\cF (1, Z)=5$ because there are $5$ tethered broken chains linking $1$ to $Z$. We give more complicated examples of tethered broken chain calculations in Example \ref{exBrokenChains}.
\end{example}

\begin{remark} Note that the modified incidence matrix with respect to $S$-conjugations and the modified M{\" o}bius function on $S$-conjugate subgroups (coming from the poset of subgroups) are constructed in the same way:  Add the columns of $S$-conjugate subgroups, pick out any row from each class. It is interesting that performing the same operation on the originals of the incidence function and the M{\" o}bius inverse ends up giving you inverse matrices; in particular, this is not what happens for modifications with respect to $\cF$-conjugation which is what is done in the rest of the paper.  We think that this shows that the
$S$-conjugation action on the subgroup poset is more special that the $\cF$-conjugation action.
\end{remark}

\section{Broken chains and the main theorem}\label{secBrokenChains}

Now that we have formulas for the number of fixed points of $\alpha_P$, we will determine how each $\alpha_P$ decomposes into $S$-orbits. For every element $X\in A(S)$ of the Burnside ring, we let $c_Q(X)$ denote the number of (virtual) $[S/Q]$-orbits, i.e. the coefficients of the linear combination $X=\sum_{[Q]_S} c_Q(X)\cdot [S/Q]$. The matrix of marks $\Mark$ encodes the number of fixed points in terms of the number of orbits, so the numbers $\abs{X^Q}$ form a fixed point vector $\ph:= \Mark\cdot(c_Q(X))$. Recall that $\Mob$ is the inverse of $\Mark$. Given any fixed point vector $\ph$, we can therefore recover the orbit decomposition as $(c_Q(X)) = \Mob\cdot \ph$.

For $\alpha _P$ we already have a formula for the number of fixed points $\abs{(\alpha_P)^Q}$, which we write in the form of
\[\abs{(\alpha_P)^Q} = \FMark_{Q^*,P^*} = \abs{W_S P^*}\cdot \tilde\zeta_\cF(Q^*,P^*)\]
where $\tilde\zeta_\cF(Q^*,P^*)$ has a complicated M\" obius formula given in \eqref{eqFixedPointMobiusChain}. We also know how $\Mob$ is given in terms of M\"obius functions. The number of $[S/Q]$-orbits in $\alpha_P$ must therefore be

\begin{align*}
& c_Q(\alpha_P) = \sum_{[R] \in Cl(S)} \Mob_{Q,R} \cdot \abs*{(\alpha_P)^R}
= \frac{1}{\abs{W_S Q}} \sum_{[R] \in Cl(S)} \tilde\mu_S([Q],[R]) \cdot \abs*{(\alpha_{P^*})^{R^*}}
\\ & =\  \frac{1}{\abs{W_S Q}} \sum_{R\in \cP} \mu(Q,R) \cdot \Big(\abs{W_S P^*}\cdot \tilde\zeta_\cF(R^*,P^*)\Big)
\\ & =\ \frac{\abs{W_S P^*}}{\abs{W_S Q}} \sum_{R\in\cP} \mu(Q,R)\cdot\hspace{-.5cm} \sum_{\substack{(R_0^*, R_1 ^*, \dotsc, R_k ^* )\in \cT_{\cF}, \\  R_1,\dotsc ,R_k\in \cP  \text{ s.t. } R_i\sim_\cF R_i^*}} (-1)^k\mu(R_0^*,R_1)\mu(R_1^*,R_2)\dotsm\mu(R_{k-1}^*,R_k)
\end{align*}
where the sum is over $\cT_{\cF}$ of all $k$-tuples, for all $k\geq 0$, of (prefixed) $\cF$-conjugacy  class representatives $R_i^* \in \cP^*$ such that $R_0^*=R^*$, $R_k^*=P^*$, and $\abs{R_0^*}<\abs{R_1^*}<\dotsb <\abs{R_k^*}$. From this we obtain that
\begin{align*}
c_Q(\alpha_P)=\ & \frac{\abs{W_S P^*}}{\abs{W_S Q}} \sum_{\substack{R_0, R_1,\dotsc,R_k\in \cP\\ \text{s.t. } R_k\sim_\cF P^*,\\ \abs{Q}\leq\abs{R_0}<\abs{R_1}<\dotsb <\abs{R_k}}} (-1)^k\mu(Q,R_0)\mu(R_0^*,R_1)\mu(R_1^*,R_2)\dotsm\mu(R_{k-1}^*,R_k)
\\ =\ & \frac{\abs{W_S P^*}}{\abs{W_S Q}} \sum_{\substack{R_0, R_1,\dotsc,R_k\in \cP\\ \text{s.t. } R_k\sim_\cF P^*,\\ \abs{Q}\leq\abs{R_0}<\abs{R_1}<\dotsb <\abs{R_k}}}
\sum_{\sigma_0\in \fC_\cP(Q,R_0)}\
\sum_{\substack{\sigma_i\in \fC_\cP(R_{i-1}^*,R_i)\\\text{for $1\leq i\leq k$}}} (-1)^{k+\abs{\sigma_0}+\abs{\sigma_1}+\dotsb+\abs{\sigma_k}}.
\end{align*}

The resulting formula is very similar to the calculations for fixed points in the previous section, except that we have an additional (possibly trivial) chain $\sigma_0$ in front. We combine this additional chain with the definition of tethered broken chains and arrive at the following definition:

\begin{definition}\label{defBrokenChain}
An \emph{$\cF$-broken chain in $\cP$ linking $Q\in \cP$ to $P\in \cP$} is a sequence of chains $(\sigma_0,\sigma_1,\dotsc,\sigma_k)$ in $\cP$ subject to the following requirements. With each chain written as $\sigma_i=(a^i_0, \dotsc,a^i_{n_i})$ they must satisfy
\begin{itemize}
\item $a^i_{n_i}\sim_\cF a^{i+1}_0$ for all $0\leq i\leq k-1$, so the endpoints of the chains fit together up to conjugation in $\cF$.
\item $a^i_0\in \cP^*$ for all $1\leq i\leq k$. Every chain except for $\sigma_0$ starts at one of the chosen representatives.
\item $\abs{\sigma_i}=n_i>0$, for all $1\leq i\leq k$. Note that $\sigma_0$ is allowed to be trivial.
\item $a^0_0 = Q$ and $a^k_{n_k}\sim_\cF P$.
\end{itemize}
As before, if $Q\sim_\cF P$, we allow the trivial broken chain with $k=0$ and $\sigma_0$ trivial.
Let $\BC_\cF (Q,P)$ be the set of $\cF$-broken chains linking $Q$ to $P$. We define the \emph{total length} of a broken chain $\sigma=(\sigma_0,\dotsc,\sigma_k)$ to be
\[\ell(\sigma) := k+\abs{\sigma_0}+\dotsb+\abs{\sigma_k}.\]
To visualize a broken chain, we represent it by the diagram
\[
\begin{tikzpicture}
\matrix (M) [matrix of math nodes, column sep=.5cm, row sep=.5cm] {
a^0_0 & \dotsb & a^0_{n_0} &&&[-.8cm] &[-.8cm] && \\
&& a^1_0 &\dotsb &a^1_{n_1} &&&& \\[-.3cm]
&&&&\phantom{a^3_0}& \ddots &\phantom{a^p_q}&&\\[-.3cm]
&&&&&& a^k_0 & \dotsb & a^k_{n_k} \\
};
\path[sloped]
(M-1-1) -- node{$<$} (M-1-2) -- node{$<$} (M-1-3) -- node{$\sim$} (M-2-3) -- node{$<$} (M-2-4) -- node{$<$} (M-2-5) -- node{$\sim$} (M-3-5)
(M-3-7) -- node{$\sim$} (M-4-7) -- node{$<$} (M-4-8) -- node{$<$} (M-4-9)
;
\end{tikzpicture}
\]
The total length of the represented broken chain is then equal to the number of $<$ and $\sim$ signs put together.
\end{definition}

Now we state our main theorem:

\begin{theorem}\label{thmOrbitsBrokenChains}
Let $\cF$ be a saturated fusion system over a finite $p$-group $S$. Let $\BC_\cF (Q,P)$ denote the set of $\cF$-broken chains linking $Q$ to $P$. Then the number of $[S/Q]$-orbits in each irreducible $\cF$-stable set $\alpha_{P}$, denoted $c_Q(\alpha_P)$, can be calculated as
\[c_Q(\alpha_P) = \frac{\abs{W_S P^*}}{\abs{W_S Q}}\cdot \sum_{\sigma\in \BC_\cF(Q,P)} (-1)^{\ell(\sigma)}\]
for $Q,P\in \cP$, where $P^*\sim_\cF P$ is fully normalized.
\end{theorem}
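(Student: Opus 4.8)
The plan is to build directly on the M\"obius-inversion computation performed just before Definition \ref{defBrokenChain}, which already rewrites the number of orbits as the alternating triple sum
\[
c_Q(\alpha_P) = \frac{\abs{W_S P^*}}{\abs{W_S Q}} \sum_{\substack{R_0,\dotsc,R_k\in \cP\\ R_k\sim_\cF P^*,\ \abs{Q}\leq\abs{R_0}<\dotsb<\abs{R_k}}}\ \sum_{\sigma_0\in \fC_\cP(Q,R_0)}\ \sum_{\substack{\sigma_i\in \fC_\cP(R_{i-1}^*,R_i)\\ 1\leq i\leq k}} (-1)^{k+\abs{\sigma_0}+\dotsb+\abs{\sigma_k}}.
\]
Granting this formula, the entire content of the theorem is to recognize the index set of this sum as $\BC_\cF(Q,P)$ and to identify the exponent with $\ell(\sigma)$. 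So I would reduce the proof to a bijection together with a bookkeeping check, exactly parallel to the way Theorem \ref{thmFixedPointBrokenChains} was obtained from its preceding proposition.

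First I would set up the bijection. To a summand I associate the sequence $\sigma=(\sigma_0,\sigma_1,\dotsc,\sigma_k)$, and I verify each clause of Definition \ref{defBrokenChain}: writing $\sigma_i=(a^i_0,\dotsc,a^i_{n_i})$, the end $a^i_{n_i}=R_i$ of $\sigma_i$ is $\cF$-conjugate to the start $a^{i+1}_0=R_i^*$ of $\sigma_{i+1}$ (the tether condition); each later chain starts at $a^i_0=R_{i-1}^*\in\cP^*$; and $a^0_0=Q$ while $a^k_{n_k}=R_k\sim_\cF P^*\sim_\cF P$. The chain $\sigma_0$ is special precisely because it starts at the actual subgroup $Q$ rather than at a chosen representative. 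Conversely, from a broken chain I set $R_i:=a^i_{n_i}$, and since $\cP^*$ contains a unique representative of each $\cF$-class, the tether condition $R_{i-1}\sim_\cF a^i_0$ together with $a^i_0\in\cP^*$ forces $a^i_0=R_{i-1}^*$; this shows the two assignments are mutually inverse.

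The one clause that is not imposed explicitly in the sum is the non-triviality $n_i>0$ for $1\leq i\leq k$ (with $\sigma_0$ allowed to be trivial), and checking that it comes for free is the step I would treat most carefully. A chain in $\fC_\cP(R_{i-1}^*,R_i)$ exists only when $R_{i-1}^*\leq R_i$; since $\cF$-conjugate subgroups are isomorphic we have $\abs{R_{i-1}^*}=\abs{R_{i-1}}<\abs{R_i}$ from the strict size inequalities, so in fact $R_{i-1}^*<R_i$ and every such chain has length $\geq 1$. The initial chain $\sigma_0\in\fC_\cP(Q,R_0)$ carries no analogous constraint, matching the exception made for $\sigma_0$ in the definition. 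Once the bijection is in hand, the exponent $k+\abs{\sigma_0}+\dotsb+\abs{\sigma_k}$ is literally $\ell(\sigma)$, and the asserted formula drops out. The main obstacle is thus not analytic but organizational: matching the index conditions of the derived sum clause-by-clause with the defining conditions of an $\cF$-broken chain, with the order inequalities quietly supplying the non-triviality of $\sigma_1,\dotsc,\sigma_k$.
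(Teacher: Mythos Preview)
Your proposal is correct and follows exactly the paper's approach: the paper's own proof reads ``Immediate from the argument at the beginning of the section,'' referring to precisely the derived triple sum you quote, with Definition~\ref{defBrokenChain} crafted so that its clauses match the index conditions of that sum. Your explicit verification of the bijection, in particular the observation that the strict order inequalities $\abs{R_{i-1}}<\abs{R_i}$ automatically enforce $n_i>0$ for $i\geq 1$, makes transparent a detail the paper leaves implicit.
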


\begin{proof}
Immediate from the argument at the beginning of the section.
\end{proof}

\begin{remark}\label{rmkTetheredIsASpecialCase}
If a broken chain $(\sigma_0,\sigma_1,\dotsc,\sigma_k)\in \BC_\cF(Q,P)$ happens to have $\sigma_0$ equal to the trivial chain, i.e. $\abs{\sigma_0}=0$, then $Q$ is the endpoint of $\sigma_0$ so $\sigma_1$ has to start at $Q^*$. The converse is also true, if $\sigma_1$ starts at $Q^*$, then $\sigma_0$ has to be trivial. In this case $(\sigma_1,\dotsc,\sigma_k)$ is exactly the data of a \emph{tethered} broken chain linking $Q^*$ to $P$.

Hence the tethered broken chains $(\sigma_1,\dotsc,\sigma_k)\in \tBC_\cF(Q^*,P)$ correspond precisely to the broken chains $(\sigma_0,\sigma_1,\dotsc,\sigma_k)\in \BC_\cF(Q,P)$ where $\sigma_0$ is the trivial chain.
This way, in diagram form, a tethered broken chain linking $Q$ (or rather $Q^*$) to $P$ looks like
\[
\begin{tikzpicture}
\matrix (M) [matrix of math nodes, column sep=.5cm, row sep=.5cm] {
Q &&&&&[-.8cm] &[-.8cm] && \\
Q^* & \dotsb & a^1_{n_1} &&&& && \\
&& a^2_0 &\dotsb &a^2_{n_2} &&&& \\[-.3cm]
&&&&\phantom{a^3_0}& \ddots &\phantom{a^p_q}&&\\[-.3cm]
&&&&&& a^k_0 & \dotsb & a^k_{n_k} \\
};
\path[sloped]
(M-1-1) -- node{$\sim$}
(M-2-1) -- node{$<$} (M-2-2) -- node{$<$} (M-2-3) -- node{$\sim$} (M-3-3) -- node{$<$} (M-3-4) -- node{$<$} (M-3-5) -- node{$\sim$} (M-4-5)
(M-4-7) -- node{$\sim$} (M-5-7) -- node{$<$} (M-5-8) -- node{$<$} (M-5-9)
;
\end{tikzpicture}
\]
with $a^k_{n_k}\sim_\cF P$. Drawn in this form, the total length of the tethered broken chain is the total number of $<$ and $\sim$ symbols, where the initial $Q\sim Q^*$ adds the necessary $+1$ in comparison with Definition \ref{defTetheredBrokenChain}.
\end{remark}

Theorem \ref{thmFixedPointBrokenChains} can thus be reformulated as
\begin{cor}[Theorem \ref{thmFixedPointBrokenChains} revisited]
Let $\cF$ be a saturated fusion system over a finite $p$-group $S$. The numbers of fixed points for each irreducible $\cF$-stable set $\alpha_{P}$ can be calculated as
\[\abs{(\alpha_P)^Q} = \abs{W_S P^*}\cdot \sum_{\substack{\sigma=(\sigma_0,\dotsc,\sigma_k)\in \BC_\cF(Q,P)\\ \abs{\sigma_0}=0}} (-1)^{\ell(\sigma)}\]
for $Q,P\in \cP$, where $P^*\sim_\cF P$ is fully normalized.
\end{cor}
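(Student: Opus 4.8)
The plan is to deduce this corollary directly from Theorem \ref{thmFixedPointBrokenChains} together with the length-preserving correspondence recorded in Remark \ref{rmkTetheredIsASpecialCase}, so that essentially no new computation is required; the substance already lives in the M\"obius-inversion calculation preceding Theorem \ref{thmFixedPointBrokenChains}.

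First I would reduce the left-hand side to fully normalized representatives. Since the irreducible $\cF$-stable set depends only on the $\cF$-conjugacy class of $P$ by Proposition \ref{propDefiningAlphas}, we have $\alpha_P = \alpha_{P^*}$; and since $\alpha_P$ is $\cF$-stable, the characterization \ref{itemFConjStable} of $\cF$-stability gives $\abs{(\alpha_P)^Q} = \abs{(\alpha_P)^{Q^*}}$ because $Q\sim_\cF Q^*$. Combining these, $\abs{(\alpha_P)^Q} = \abs{(\alpha_{P^*})^{Q^*}}$, so the left-hand side of the corollary coincides with the left-hand side of Theorem \ref{thmFixedPointBrokenChains}, which expands as $\abs{W_S P^*}\cdot \sum_{\sigma\in \tBC_\cF(Q^*,P^*)} (-1)^{\ell(\sigma)}$.

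Next I would match the right-hand sides via Remark \ref{rmkTetheredIsASpecialCase}. Because $P\sim_\cF P^*$, the endpoint condition $a^k_{n_k}\sim_\cF P$ agrees with $a^k_{n_k}\sim_\cF P^*$, whence $\tBC_\cF(Q^*,P) = \tBC_\cF(Q^*,P^*)$. Appending the trivial chain $\sigma_0 = (Q)$ in front of a tethered broken chain $(\sigma_1,\dotsc,\sigma_k)\in \tBC_\cF(Q^*,P)$ produces exactly the broken chains $(\sigma_0,\sigma_1,\dotsc,\sigma_k)\in \BC_\cF(Q,P)$ with $\abs{\sigma_0}=0$, and this assignment is a bijection. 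Moreover, since $\abs{\sigma_0}=0$, the total length $\ell(\sigma)=k+\abs{\sigma_0}+\dotsb+\abs{\sigma_k}$ of the broken chain equals the total length $k+\abs{\sigma_1}+\dotsb+\abs{\sigma_k}$ of the tethered chain, so $\ell$ is preserved under the correspondence. Summing $(-1)^{\ell(\sigma)}$ over either index set therefore yields the same value, which establishes the displayed formula.

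The only point requiring care is the bookkeeping inside the bijection: I would verify that requiring $\abs{\sigma_0}=0$ forces $a^1_0 = Q^*$ (so the image is genuinely a tethered chain, using that $Q\sim_\cF a^1_0$ with $a^1_0\in\cP^*$), and conversely that every tethered chain arises from a unique such broken chain, exactly as spelled out in Remark \ref{rmkTetheredIsASpecialCase}. I do not anticipate a genuine obstacle here, since the corollary is merely a repackaging of Theorem \ref{thmFixedPointBrokenChains} in the uniform language of \emph{broken} chains rather than tethered ones.
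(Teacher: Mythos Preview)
Your proposal is correct and follows the same approach as the paper: the corollary is stated immediately after Remark \ref{rmkTetheredIsASpecialCase} with the words ``Theorem \ref{thmFixedPointBrokenChains} can thus be reformulated as'' and no further proof, so the entire argument is precisely the bijection of that remark together with the $\cF$-stability reduction you spell out. If anything, you have been more explicit than the paper about why the left-hand side only depends on $Q^*$ and $P^*$.
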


\begin{example}\label{exBrokenChains}  Let $S=D_8$ and $\cF=\cF_S(A_6)$ as before. We showed earlier that $c_Q (\alpha_P)=1$ when $Q=C_2^1$ and $P=V_4^2$. Note that in this case $\abs{W_S P^*}=\abs{W_SQ}=2$ and there is only one broken chain from $C_2^1$ to $V_4^2$ which is
\[
\begin{tikzpicture}[ampersand replacement=\&,baseline=(M-1-1.base)]
\matrix (M) [matrix of math nodes, column sep=.5cm, row sep=.5cm] {
  C_2^1 \&\\
  Z  \&  V_4^2 \\
};
\path[sloped]
(M-1-1)   -- node{$\sim$} (M-2-1) -- node{$<$} (M-2-2);
\end{tikzpicture}
\]
Note that this is also a tethered broken chain. So we have $\abs{(\alpha_P )^Q}=\abs{W_S P^*}\cdot 1=2$ for $Q=C_2 ^1$ and $P=V_4 ^2$.

If we repeat the same calculation for $Q=C_2^1$ and $P=D_8$, then we observe that there are $10$ broken chains from $C_2^1$ to $D_8$ which are
\[
\begin{tikzpicture}[ampersand replacement=\&,baseline=(M-1-1.base)]
\matrix (M) [matrix of math nodes, column sep=.5cm, row sep=.5cm] {
C_2 ^1 \& D_8 \&  \\
C_2^1 \& V_4^1 \& D_8\\
};
\path[sloped]
(M-1-1)   -- node{$<$} (M-1-2) -- (M-2-1) -- node{$<$} (M-2-2) -- node{$<$} (M-2-3) ;
\end{tikzpicture}
\qquad\qquad
\begin{tikzpicture}[ampersand replacement=\&,baseline=(M-1-1.base)]
\matrix (M) [matrix of math nodes, column sep=.5cm, row sep=.5cm] {
  C_2^1 \&\\
  Z  \&  D_8 \\
};
\path[sloped]
(M-1-1)   -- node{$\sim$} (M-2-1) -- node{$<$} (M-2-2);
\end{tikzpicture}
\qquad\qquad
\begin{tikzpicture}[ampersand replacement=\&,baseline=(M-1-1.base)]
\matrix (M) [matrix of math nodes, column sep=.5cm, row sep=.5cm] {
  C_2^1 \& V_4^1 \& \\
   \&  V_4 ^1 \& D_8 \\
};
\path[sloped]
(M-1-1)   -- node{$<$} (M-1-2) -- node{$\sim$} (M-2-2) -- node{$<$} (M-2-3);
\end{tikzpicture}
\]

\[
\begin{tikzpicture}[ampersand replacement=\&,baseline=(M-1-1.base)]
\matrix (M) [matrix of math nodes, column sep=.5cm, row sep=.5cm] {
  C_2^1 \&  \& \\
  Z \& V_4^1 \& D_8  \\
};
\path[sloped]
(M-1-1)   -- node{$\sim$} (M-2-1) -- node{$<$} (M-2-2) -- node{$<$} (M-2-3) ;
\end{tikzpicture}
\qquad\quad
\begin{tikzpicture}[ampersand replacement=\&,baseline=(M-1-1.base)]
\matrix (M) [matrix of math nodes, column sep=.5cm, row sep=.5cm] {
  C_2^1 \&  \& \\
  Z \& C_4 \& D_8  \\
};
\path[sloped]
(M-1-1)   -- node{$\sim$} (M-2-1) -- node{$<$} (M-2-2) -- node{$<$} (M-2-3) ;
\end{tikzpicture}
\qquad\quad
\begin{tikzpicture}[ampersand replacement=\&,baseline=(M-1-1.base)]
\matrix (M) [matrix of math nodes, column sep=.5cm, row sep=.5cm] {
  C_2^1 \&  \& \\
  Z \& V_4^2 \& D_8  \\
};
\path[sloped]
(M-1-1)   -- node{$\sim$} (M-2-1) -- node{$<$} (M-2-2) -- node{$<$} (M-2-3) ;
\end{tikzpicture}
\]

\[
\begin{tikzpicture}[ampersand replacement=\&,baseline=(M-1-1.base)]
\matrix (M) [matrix of math nodes, column sep=.5cm, row sep=.5cm] {
  C_2^1 \&  \& \\
  Z \& V_4^1 \& \\
 \& V_4 ^1 \& D_8 \\  };
\path[sloped]
(M-1-1) -- node{$\sim$} (M-2-1) -- node{$<$} (M-2-2) -- node{$\sim$} (M-3-2) -- node{$<$} (M-3-3) ;
\end{tikzpicture}
\qquad\quad
\begin{tikzpicture}[ampersand replacement=\&,baseline=(M-1-1.base)]
\matrix (M) [matrix of math nodes, column sep=.5cm, row sep=.5cm] {
  C_2^1 \&  \& \\
  Z \& C_4 \& \\
 \& C_4 \& D_8 \\  };
\path[sloped]
(M-1-1) -- node{$\sim$} (M-2-1) -- node{$<$} (M-2-2) -- node{$\sim$} (M-3-2) -- node{$<$} (M-3-3) ;
\end{tikzpicture}
\qquad\quad
\begin{tikzpicture}[ampersand replacement=\&,baseline=(M-1-1.base)]
\matrix (M) [matrix of math nodes, column sep=.5cm, row sep=.5cm] {
  C_2^1 \&  \& \\
  Z \& V_4^2 \& \\
 \& V_4 ^2 \& D_8 \\  };
\path[sloped]
(M-1-1) -- node{$\sim$} (M-2-1) -- node{$<$} (M-2-2) -- node{$\sim$} (M-3-2) -- node{$<$} (M-3-3) ;
\end{tikzpicture}
\]

If we sum the signs $(-1)^{\ell(\sigma)}$ over  all the broken chains above, and multiply it with $\abs{W_S P^*}/\abs{W_S Q}$, we get
\[ c_Q(\alpha_P)=\frac{1}{2}(1-2+4-3)=0.\] Note that if we only consider the tethered broken chains, then we obtain \[\abs{(\alpha_P)^Q}=\abs{W_SP^*} (1-3+3)=1.\]
\end{example}

Note that in the above example some of the broken chains naturally can be paired with each other to cancel their contributions. For example, all the broken chains on the second row cancels with the broken chains on the third row. In the next section we prove that the broken chain calculations for calculating $c_Q(\alpha_P)$ and $\abs{(\alpha_P )^Q}$ can be simplified.

\section{Computational simplifications}\label{secSimplifications}

In this section, we show that certain types of broken chains can be naturally paired with certain other types of broken chains in such a way that their contributions in the summation in Theorem \ref{thmOrbitsBrokenChains} cancel each other. This gives a modified version of the formula in Thereom \ref{thmOrbitsBrokenChains} where we only consider broken chains which are not in either type. We start with a definition of these types.

\begin{definition}
Let $\sigma=(\sigma_0,\dotsc,\sigma_k)$ be a broken chain in $\cF$ with $\sigma_i=(a^i_0,\dotsc, a^i_{n_i})$.
Suppose that a subgroup $a^i_j$ in the broken chain is $S$-conjugate to the chosen representative $(a^i_j)^*\in \cP^*$. We say that such an $a^i_j$ is \emph{a $*$-group of type 1} if $0<j<n_i$, or if $i=j=0$ and $n_0>0$.
We say that $a^i_j$ is \emph{a $*$-group of type 2} if $j=n_i$ and $0\leq i<k$. In the remaining cases we either have $j=0$ and $i>0$, in which case $a^i_j\in \cP^*$ is always required, or we have $i=k$ and $j=n_k$ with $a^i_j$ as the very last group. In either of these last cases, $a^i_j$ is \emph{not} a $*$-group.

In diagram form the two types of $*$-groups are as follows:\begin{align*}
\tag{\text{Type $1$}}
\begin{tikzpicture}[ampersand replacement=\&,baseline=(M-2-3.base)]
\matrix (M) [matrix of math nodes, column sep=.5cm, row sep=.5cm] {
\ddots \&[-.5cm] \&\&\&[-.5cm] \\[-.8cm]
\& \dotsb \& (a^i_j)^* \& \dotsb \& \\[-.8cm]
\&\&\&\&\ddots\\
};
\path[sloped]
(M-2-2) -- node{$<$} (M-2-3) -- node{$<$} (M-2-4)
;
\end{tikzpicture}
&&\text{or} &&
\begin{tikzpicture}[ampersand replacement=\&,baseline=(M-1-1.base)]
\matrix (M) [matrix of math nodes, column sep=.5cm, row sep=.5cm] {
(a^0_0)^*  \& \dotsb \&[-.5cm] \\[-.8cm]
\&\&\ddots\\
};
\path[sloped]
(M-1-1) -- node{$<$} (M-1-2)
;
\end{tikzpicture}
\\
\tag{\text{Type $2$}}
\begin{tikzpicture}[ampersand replacement=\&,baseline=(M-2-3.base)]
\matrix (M) [matrix of math nodes, column sep=.5cm, row sep=.5cm] {
\ddots \&[-.5cm] \&\&\&[-.5cm] \\[-.8cm]
\& \dotsb \& (a^i_j)^* \&  \& \\
\&\& a^{i+1}_0\& \dotsb \& \\[-.8cm]
\&\&\&\&\ddots\\
};
\path[sloped]
(M-2-2) -- node{$<$} (M-2-3) -- node{$\sim$} (M-3-3) -- node{$<$} (M-3-4)
;
\end{tikzpicture}
&&\text{or} &&
\begin{tikzpicture}[ampersand replacement=\&,baseline=(M-1-1.base)]
\matrix (M) [matrix of math nodes, column sep=.5cm, row sep=.5cm] {
(a^0_0)^* \& \&[-.5cm] \\
a^1_0\& \dotsb \& \\[-.8cm]
\& \&\ddots\\
};
\path[sloped]
(M-1-1)-- node{$\sim$} (M-2-1) -- node{$<$} (M-2-2)
;
\end{tikzpicture}
\end{align*}

If a broken chain $\sigma$ contains at least one $*$-group, we say that $\sigma$ is \emph{sparkling of type 1 or 2} where the type of $\sigma$ is determined by the type of the smallest $*$-group in $\sigma$.
A broken chain is \emph{drab} if it has no $*$-groups at all.
\end{definition}

\begin{example}\label{exTypes}
Consider the last calculation in Example \ref{exBrokenChains}, where $Q=C_2^1$ and $P=D_8$. The broken chains on the second row are all sparkling of type $1$. More specifically in all these, the second chains include $*$-groups of type 1 which are $V_4^1$, $C_4$, and $V_4^2$. Note also that the second broken chain on the first row is a sparkling broken chain of the type $1$. The fourth chain on the first row and all the chains on the third row are sparkling broken chains of type 2. We will see below that these type 1 and type 2 chains can be paired  in an obvious way. The only drab broken chains in this example are the first and third broken chains on the first row.
\end{example}

\begin{prop}\label{propNoStars}
Let $\cF$ be a saturated fusion system over a finite $p$-group $S$. In calculating the coefficients $c_Q(\alpha_P)$ by Theorem \ref{thmOrbitsBrokenChains}, it is sufficient to consider only the drab broken chains:
\[c_Q(\alpha_P) = \frac{\abs{W_S P^*}}{\abs{W_S Q}}\cdot \sum_{\substack{\sigma\in \BC_\cF(Q,P)\\ \sigma\textup{ is drab}}} (-1)^{\ell(\sigma)}\]
for $Q,P\in \cP$, where $P^*\sim_\cF P$ is fully normalized.
\end{prop}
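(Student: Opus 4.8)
The plan is to prove the proposition by constructing an explicit sign-reversing involution $\Theta$ on the set of \emph{sparkling} broken chains in $\BC_\cF(Q,P)$, matching each type~1 chain with a type~2 chain whose length differs by exactly one. Since drab chains are by definition those with no $*$-groups, they are precisely the chains left untouched by $\Theta$, so once the sparkling chains cancel in $\pm$ pairs the sum in Theorem~\ref{thmOrbitsBrokenChains} collapses to the drab sum.

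First I would check that each sparkling chain has a \emph{unique smallest} $*$-group, which is what makes $\Theta$ canonical. Reading $\sigma=(\sigma_0,\dotsc,\sigma_k)$ from left to right, the orders of the subgroups strictly increase within each $\sigma_i$ (since $\abs{\sigma_i}>0$ forces a strict chain) and are equal across each break $a^i_{n_i}\sim_\cF a^{i+1}_0$; hence the sequence of orders is non-decreasing and is constant only on a single break. As the forced starts $a^i_0\in\cP^*$ for $i\ge1$ are never counted as $*$-groups, among the subgroups of any fixed order at most one is a $*$-group, so the $*$-group $T$ of least order is unique, say at position $a^i_j$. Its type is then unambiguous: a $*$-group is of type~1 exactly when $j<n_i$ (interior, or the initial vertex of a nontrivial $\sigma_0$) and of type~2 exactly when $j=n_i$ with $i<k$.

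Next I would define $\Theta$ by toggling whether $\sigma$ \emph{passes through} $T$ or \emph{breaks at} $T$. The only technical point — and the step I expect to be the main obstacle — is that splitting or merging at $T$ requires re-anchoring a tail at the chosen representative $T^*$, which demands an $S$-conjugation, and a careless choice would destroy well-definedness. I would remove this ambiguity by fixing, once and for all, a system of elements $h_{Y\leftarrow X}\in S$ for all pairs of $S$-conjugate subgroups $X\sim_S Y$, satisfying $\lc{(h_{Y\leftarrow X})}{X}=Y$, $h_{X\leftarrow X}=1$, and $h_{X\leftarrow Y}=(h_{Y\leftarrow X})^{-1}$. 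If $T$ is of type~1, I split $\sigma_i$ at $T$ into the part ending at $T$ and the tail starting at $T$, and conjugate that tail by $h_{T^*\leftarrow T}$ so it begins at $T^*\in\cP^*$; this inserts one extra break at $T$. If $T$ is of type~2, I conjugate the chain immediately after the break by $h_{T\leftarrow T^*}$ so that it begins at $T$ and concatenate it with the chain ending at $T$, removing one break. Because only a tail is conjugated, the $\cF$-class of every subsequent endpoint (and hence every subsequent chosen representative and break) is unchanged, so the move is purely local, keeps the initial group $Q$ and the final $\cF$-class $[P]_\cF$ fixed, and lands back in $\BC_\cF(Q,P)$. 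The conjugated subgroups all have order $>\abs{T}$, so $T$ remains the smallest $*$-group but with its type switched; and the relation $h_{X\leftarrow Y}=(h_{Y\leftarrow X})^{-1}$ forces splitting and merging to be mutually inverse, giving $\Theta^2=\id$ with no fixed points among sparkling chains.

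Finally I would verify that $\Theta$ is sign-reversing. Splitting a chain at one of its vertices partitions its length $n_i$ into two pieces summing to $n_i$ while increasing the number $k$ of chains by one, so $\ell(\sigma)=k+\abs{\sigma_0}+\dotsb+\abs{\sigma_k}$ goes up by exactly $1$; merging does the reverse. Thus $(-1)^{\ell(\sigma)}$ changes sign under $\Theta$, the sparkling contributions cancel in pairs, and the formula of Theorem~\ref{thmOrbitsBrokenChains} reduces to a sum over drab broken chains only, proving Proposition~\ref{propNoStars}.
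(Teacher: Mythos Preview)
Your proposal is correct and follows essentially the same route as the paper's proof. Both arguments locate the smallest $*$-group $T$ in a sparkling broken chain, fix once and for all an $S$-conjugating element taking $T$ to $T^*$, and then toggle between ``passing through $T$'' (type~1) and ``breaking at $T$'' (type~2) by conjugating just the single subchain after $T$; the paper does this with one element $s$ per subgroup $R$, while you package the same data as a coherent system $h_{Y\leftarrow X}$ with $h_{X\leftarrow Y}=(h_{Y\leftarrow X})^{-1}$, and you make explicit the uniqueness of the smallest $*$-group, which the paper leaves implicit.
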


\begin{proof}
By Theorem \ref{thmOrbitsBrokenChains} we have
\[c_Q(\alpha_P) = \frac{\abs{W_S P^*}}{\abs{W_S Q}}\cdot \sum_{\substack{\sigma\in \BC_\cF(Q,P)}} (-1)^{\ell(\sigma)}\]
for $Q,P\in \cP$, where $P^*\sim_\cF P$ is fully normalized. For each subgroup $R\leq S$ we will consider all the sparkling broken chains that have $R$ as their smallest $*$-group and links $Q$ to $P$. For each $R$ we will show that these broken chains cancel each other in the sum above, leaving only the drab broken chains at the end.
In order for $R$ to be a $*$-group at all, $R$ must be $S$-conjugate to the chosen representative $R^*\in \cP^*$. We can therefore choose an $s\in S$ such that $\lc s R = R^*$, and we let $s$ be fixed for the remainder of the proof.

Let $\sigma\in \BC_\cF(Q,P)$ be a broken chain with $R$ as its smallest $*$-group. Suppose $\sigma=(\dotsc,\sigma_*,\dotsc)$ where $\sigma_*$ is the chain containing $R$ as a $*$-group. If $R$ is at the end of $\sigma_*$, then $\sigma$ is type 2, otherwise $\sigma$ is type 1.

If $\sigma$ is type 1, then we write $\sigma_*=(A_0,\dotsc,A_{m-1},R,B_1,\dotsc,B_n)$ where $n\geq 1$. We can then conjugate the entire second part of the chain with $s$ to get subgroups $C_i := \lc s B_i$. These form a chain $(R^*,C_1,\dotsc, C_n)$ which starts at $R^*\in \cP^*$ and has length at least $1$ (see the illustration \eqref{eqSparklingCancellation} below).
We also have $C_n\sim_S B_n\sim_\cF B_n^*$, so we can ``break'' $\sigma_*$ at $R$ into two chains and get a legal broken chain $\sigma':=(\dotsc,(A_0,\dotsc,A_{m-1},R),(R^*,C_1,\dotsc,C_n),\dotsc)$ where we don't change any other part of $\sigma$. The new broken chain $\sigma'$ is type $2$ with $R$ as its smallest $*$-group. Since $\sigma'$ has one extra break compared to $\sigma$, $\ell(\sigma')=\ell(\sigma)+1$.

If alternatively $\sigma$ has type 2, we write $\sigma_*=(A_0,\dotsc,A_{m-1},R)$ and let $(R^*,C_1,\dotsc,C_n)$ be the chain of $\sigma$ that follows $\sigma_*$ (such a chain exists since $R$ is not the very last group of $\sigma$). We conjugate every $C_i$ with $s$ from the right $B_i:=C_i^s$, and they form a chain $(R,B_1,\dotsc,B_n)$ starting at $R$ and satisfying $B_n\sim_S C_n\sim_\cF C_n^*$. We can then combine $\sigma_*$ with the $B_i$-chain to get a single chain, and a new broken chain $\sigma':=(\dotsc,(A_0,\dotsc,A_{m-1},R,B_1,\dotsc,B_n),\dotsc)$ of type $1$ with $R$ as its smallest $*$-group. We also have $\ell(\sigma')=\ell(\sigma)-1$.

The two operations are inverses to each other and are illustrated below:
\begin{equation}\label{eqSparklingCancellation}
\begin{tikzpicture}[baseline=(M-2-4.base)]
\matrix (M) [matrix of math nodes, column sep=.5cm, row sep=.5cm] {
\ddots &&\\
A_0 & \dotsb & A_{m-1} & R & B_1&\dotsb& B_n  \\
&&& R^* & C_1 &\dotsb &C_n && \\
&&&&&& \ddots \\
};
\path[sloped]
(M-1-1) -- node{$\sim$} (M-2-1) -- node{$<$} (M-2-2) -- node{$<$} (M-2-3) -- node{$<$} (M-2-4) -- node{$<$} (M-2-5) -- node{$<$} (M-2-6) --node{$<$} (M-2-7) -- node{$\sim_S$} (M-3-7)
(M-2-4) -- node{$\sim_S$} (M-3-4) -- node{$<$} (M-3-5) -- node{$<$} (M-3-6)-- node{$<$} (M-3-7) -- node{$\sim$} (M-4-7)
(M-2-5) --node{$\sim_S$} (M-3-5)
;
\node [node distance=.2cm, above=of M-2-6] {Type 1};
\node [node distance=.2cm, below=of M-3-4] {Type 2};
\end{tikzpicture}
\end{equation}
Because any two corresponding broken chains have lengths that differ by $1$, they cancel in the sum of the Theorem \ref{thmOrbitsBrokenChains}.
\end{proof}

Another way to reduce the number of terms in the sum of Theorem \ref{thmOrbitsBrokenChains}, is to limit the sizes of the individual chains in a broken chain. This stems from the fact that the usual M\"obius function for subgroups of $p$-groups has $\mu(A,B)=0$ unless $B\leq N_S A$ with $B/A$ elementary abelian (see  \cite{HawkesIsaacsOzaydin}*{Corollary 3.5},
 \cite{KratzerThevenaz}*{Proposition 2.4}).

\begin{prop}\label{propElmAbQuotients}
Let $\cF$ be a saturated fusion system over a finite $p$-group $S$. In calculating the coefficients $c_Q(\alpha_P)$ by Theorem \ref{thmOrbitsBrokenChains}, it is sufficient to consider only broken chains $(\sigma_0,\dotsc,\sigma_k)$ where every $\sigma_i=(a^i_0,\dotsc,a^i_{n_i})$ has $a^i_{n_i}\leq N_S(a^i_0)$ with $a^i_{n_i}/a^i_0$ elementary abelian. Therefore, we have
\[c_Q(\alpha_P) = \frac{\abs{W_S P^*}}{\abs{W_S Q}}\cdot \sum_{\substack{\sigma=((a^i_j)_{j=0}^{n_i})_{i=0}^k\in \BC_\cF(Q,P),\\ \textup{s.t. each $a^i_{n_i}/a^i_0$ is elm.ab.}}} (-1)^{\ell(\sigma)}\]
for $Q,P\in \cP$, where $P^*\sim_\cF P$ is fully normalized.
\end{prop}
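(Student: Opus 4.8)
The plan is to reverse the chain-expansion that produced the broken-chain sum of Theorem~\ref{thmOrbitsBrokenChains}. Recall from the derivation opening this section that, before chains were introduced, the coefficient had the closed Möbius form
\[
c_Q(\alpha_P) = \frac{\abs{W_S P^*}}{\abs{W_S Q}} \sum_{(R_0,\dotsc,R_k)} (-1)^k\, \mu(Q,R_0)\,\mu(R_0^*,R_1)\dotsm\mu(R_{k-1}^*,R_k),
\]
the sum running over tuples with $R_k\sim_\cF P^*$ and $\abs{Q}\leq\abs{R_0}<\dotsb<\abs{R_k}$, and that each factor was subsequently replaced by $\mu(a,b)=\sum_{\tau\in\fC_\cP(a,b)}(-1)^{\abs\tau}$ to rewrite the products of Möbius values as a sum over broken chains. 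To prove the proposition I would undo this last replacement in a controlled way.

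Concretely, I would partition $\BC_\cF(Q,P)$ according to the \emph{skeleton} of a broken chain $\sigma=(\sigma_0,\dotsc,\sigma_k)$, by which I mean the data of $k$ together with the endpoints $(a^i_0,a^i_{n_i})$ of each constituent chain; here $a^0_0=Q$, $a^i_0=R_{i-1}^*$ for $i\geq1$, and $a^i_{n_i}=R_i$. Within a fixed skeleton the endpoints are held fixed while the interior vertices of each $\sigma_i$ range over all chains between those endpoints, and since $\ell(\sigma)=k+\abs{\sigma_0}+\dotsb+\abs{\sigma_k}$ the signed block-sum factors as
\[
\sum_{\substack{\sigma\ \textup{with this}\\ \textup{skeleton}}} (-1)^{\ell(\sigma)} = (-1)^k \prod_{i=0}^{k} \Bigl(\sum_{\tau\in\fC_\cP(a^i_0,a^i_{n_i})} (-1)^{\abs{\tau}}\Bigr) = (-1)^k \prod_{i=0}^{k} \mu\bigl(a^i_0,a^i_{n_i}\bigr).
\]

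Next I would invoke the structural vanishing result for $p$-groups quoted above, namely $\mu(A,B)=0$ unless $B\leq N_S(A)$ with $B/A$ elementary abelian (\cite{HawkesIsaacsOzaydin}*{Corollary 3.5}, \cite{KratzerThevenaz}*{Proposition 2.4}). A single offending endpoint pair $(a^i_0,a^i_{n_i})$ therefore kills the entire block. Hence only skeletons all of whose endpoint pairs satisfy the normality-and-elementary-abelian condition contribute, and reassembling exactly these surviving blocks reproduces precisely the sum over broken chains in which every $\sigma_i$ has $a^i_{n_i}\leq N_S(a^i_0)$ with $a^i_{n_i}/a^i_0$ elementary abelian. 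This is the asserted identity.

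I expect no genuine obstacle, as the argument is bookkeeping once the factorization is set up; the one point demanding care is that the surviving restriction constrains only the \emph{endpoints} of each $\sigma_i$ and leaves its interior vertices free, which is exactly what the factorization isolates, the interior being summed away into the single value $\mu(a^i_0,a^i_{n_i})$. A small uniformity check is that the leading chain $\sigma_0$ starts at $Q$ rather than at a representative, but the vanishing result applies verbatim to $\mu(Q,R_0)$, so the endpoint condition on $\sigma_0$ is handled on the same footing as the others.
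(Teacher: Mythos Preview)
Your proposal is correct and follows essentially the same route as the paper's own proof: both undo the chain-expansion to recover the product-of-M\"obius-values form, invoke the cited vanishing result $\mu(A,B)=0$ unless $B\leq N_S(A)$ with $B/A$ elementary abelian to discard the offending terms, and then re-expand the surviving terms back into broken chains. Your ``skeleton'' bookkeeping makes the factorization slightly more explicit, but the argument is the same.
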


\begin{proof}
In the proof of Theorem \ref{thmOrbitsBrokenChains} we consider the sum
\[c_Q(\alpha_P)= \frac{\abs{W_S P^*}}{\abs{W_S Q}}\hspace{-.3cm} \sum_{\substack{R_0, R_1,\dotsc,R_k\in \cP\\ \text{s.t. } R_k\sim_\cF P^*,\\ \abs{Q}\leq\abs{R_0}<\abs{R_1}<\dotsb <\abs{R_k}}}\hspace{-.3cm} (-1)^k\mu(Q,R_0)\mu(R_0^*,R_1)\mu(R_1^*,R_2)\dotsm\mu(R_{k-1}^*,R_k)\]
A term of this sum is only nonzero if $Q\lhd R_0$ and $R_{i-1}\lhd R_i$ with elementary abelian quotients for all $i$. Hence the sum reduces to
\[c_Q(\alpha_P)= \frac{\abs{W_S P^*}}{\abs{W_S Q}}\hspace{-.6cm} \sum_{\substack{R_0, R_1,\dotsc,R_k\in \cP\\ \text{s.t. } R_k\sim_\cF P^*,\\ \abs{Q}\leq\abs{R_0}<\abs{R_1}<\dotsb <\abs{R_k}, \\ \textup{$R_0/Q$ and $R_i/R_{i-1}$ are elm.ab.}}}\hspace{-.6cm} (-1)^k\mu(Q,R_0)\mu(R_0^*,R_1)\mu(R_1^*,R_2)\dotsm\mu(R_{k-1}^*,R_k)\]
As in the proof of Theorem \ref{thmOrbitsBrokenChains} we then replace each product of M\"obius functions by broken chains and arrive at the formula in the proposition.
\end{proof}

\begin{remark}
Sadly the two reductions of Propositions \ref{propNoStars} and \ref{propElmAbQuotients} cannot be combined, as that would require cancelling the same broken chain with two different other broken chains. To see this, let $\cF=\cF_{D_8}(A_6)$ be as in Example 3.2, where we showed that $\alpha_{D_8}=[S/D_8]$.  Let us show that if we exclude both the sparkling broken chains and those that violate the hypothesis of Proposition \ref{propElmAbQuotients}, then we would not be able to compute the coefficient of the orbit $[S/C_2^1]$ in $\alpha_{D_8}$ correctly.

As it is listed in Example \ref{exBrokenChains}, there are a total of 10 broken chains linking $C_2^1$ to $D_8$.  Of these, only $(C_2^1<D_8)$ and $(C_2^1,Z<D_8)$ are drab, and of those, only the second would be counted in Proposition \ref{propElmAbQuotients}.  Thus there is no chance for cancelation, and the intersections of Propositions \ref{propNoStars} and \ref{propElmAbQuotients} would yield $c_{C_2^1}(\alpha_{D_8})=1/2$, which is obviously false.  The issue is that there can be cancelation between sparkling subgroups and subgroups that violate the hypothesis of Proposition \ref{propElmAbQuotients}, so that by combining both conditions we may undercount the cancelations needed in the proof of Theorem 5.2.
\end{remark}

\section{An application to characteristic bisets}
\label{secApplications}

In this section we demonstrate how we can use Theorem \ref{thmOrbitsBrokenChains} to give structural results for the minimal characteristic biset associated to a saturated fusion system.

\begin{definition}\label{defCharBiset}
We consider $(S,S)$-bisets, i.e. finite sets equipped with both a left $S$-action and a right $S$-action, and such that the actions commute. The structure of such a biset $X$ is equivalent to an action of $S\x S$ on $X$ with $(s_1,s_2).x = s_1.x.(s_2)^{-1}$, and for each point $x\in X$ we speak of the stabilizer $Stab_{S\x S}(x)$ as a subgroup of $S\x S$.

An \emph{$\cF$-characteristic biset} for a fusion system $\cF$ on $S$ is a biset $\Omega$ satisfying three properties originally suggested by Linckelmann-Webb:
\begin{enumerate}
\item\label{itemF-generated} For every point $\omega\in \Omega$ the stabilizer $Stab_{S\x S}(\omega)$ has the form of a \emph{graph}/\emph{twisted diagonal} $\Delta(P,\ph)$ for some $\ph\in \cF(P,S)$ and $P\leq S$, where the twisted diagonal $\Delta(P,\ph)\leq S\x S$ is defined as
    \[\Delta(P,\ph) = \{(\ph(s),s) \mid s\in P\}.\]
\item\label{itemF-stable} $\Omega$ is $\cF$-stable with respect to both $S$-actions. For bisets that satisfy property \ref{itemF-generated} this boils down to checking that the number of fixed points satisfy
\[\abs*{\Omega^{\Delta(P,id)}} = \abs*{\Omega^{\Delta(P,\ph)}} = \abs*{\Omega^{\Delta(\ph P, id)}}\]
for all $\ph\in \cF(P,S)$ and $P\leq S$.
\item The prime $p$ does not divide $\abs \Omega / \abs S$ (which is an integer because of \ref{itemF-generated}). This ensures that $\Omega$ is not degenerate.
\end{enumerate}
\end{definition}

In \cite{RagnarssonStancu} it is shown that the exists a characteristic biset for $\cF$ if and only if $\cF$ is saturated, and it is shown how to reconstruct $\cF$ given any $\cF$-characteristic biset. In \cite{GelvinReeh} two of the authors of this paper give a parametrization of all the characteristic bisets for a given saturated fusion system $\cF$. In particular it is shown that there is a unique minimal $\cF$-characteristic biset $\Lambda_\cF$, and every other $\cF$-characteristic biset contains at least one copy of $\Lambda_\cF$.
\begin{theorem}[\cite{GelvinReeh}*{Theorem 5.3 and Corollary 5.4}]
Let $\cF$ be a saturated fusion system on a finite $p$-group $S$, and consider the product fusion system $\cF\x \cF$ on $S\x S$. According to Proposition \ref{propDefiningAlphas} there is an irreducible $(\cF\x \cF)$-stable $(S\x S)$-set $\alpha_{\Delta(S,id)}$ corresponding to the diagonal $\Delta(S,id)\leq S\x S$. Denote this $(S\x S)$-set or $(S,S)$-biset by $\Lambda_\cF:= \alpha_{\Delta(S,id)}$.

The biset $\Lambda_\cF$ is then $\cF$-characteristic, and every $\cF$-characteristic biset contains a copy of $\Lambda_\cF$ (up to isomorphism). Hence $\Lambda_\cF$ is the unique minimal characteristic biset for $\cF$.
\end{theorem}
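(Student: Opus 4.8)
The plan is to verify the three defining properties of Definition \ref{defCharBiset} for $\Lambda_\cF = \alpha_{\Delta(S,id)}$, and then to read off minimality from the uniqueness of the basis decomposition of an $(\cF\times\cF)$-stable set. Throughout I use that $\cF\times\cF$ is saturated whenever $\cF$ is, so that Proposition \ref{propDefiningAlphas} applies to $S\times S$ and the $\alpha_R$ for $R\leq S\times S$ are available.

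First I would establish property \ref{itemF-generated}. By the last sentence of Proposition \ref{propDefiningAlphas}, every point-stabilizer in $\alpha_{\Delta(S,id)}$ is $(\cF\times\cF)$-subconjugate to $\Delta(S,id)$. The key computation is that a subgroup $T\leq S\times S$ is $(\cF\times\cF)$-subconjugate to $\Delta(S,id)$ precisely when it is a twisted diagonal $\Delta(Q,\ph)$ with $\ph\in\cF(Q,S)$: writing a subconjugating morphism as $\rho_1\times\rho_2$ with $\rho_1,\rho_2$ in $\cF$, the requirement $\rho_1(t_1)=\rho_2(t_2)$ for all $(t_1,t_2)\in T$ forces $T$ to be the graph of the isomorphism $\rho_2^{-1}\rho_1$, which lies in $\cF$. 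This yields property \ref{itemF-generated}. Property \ref{itemF-stable} is then immediate: $\alpha_{\Delta(S,id)}$ is $(\cF\times\cF)$-stable by construction, and one checks $\Delta(P,id)\sim_{\cF\times\cF}\Delta(P,\ph)\sim_{\cF\times\cF}\Delta(\ph P,id)$ using the $(\cF\times\cF)$-morphisms $\ph\times\id$ and $\id\times\ph$, so that the fixed-point equalities demanded in \ref{itemF-stable} are special cases of $(\cF\times\cF)$-stability.

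The substantive point is the third property, $p\nmid\abs\Omega/\abs S$. Here I would argue modulo $p$. Since every stabilizer is a twisted diagonal $\Delta(Q,\ph)$, each orbit has size $\abs S^2/\abs Q = \abs S\cdot(\abs S/\abs Q)$, so $\abs{\Lambda_\cF}/\abs S=\sum_{\text{orbits}}\abs S/\abs Q$; as $S$ is a $p$-group, only orbits with $\abs Q=\abs S$, i.e.\ the full twisted diagonals $\Delta(S,\ph)$ with $\ph\in\Aut_\cF(S)$, contribute nonzero terms mod $p$. A direct normalizer computation gives $\abs{N_{S\times S}(\Delta(S,\ph))}=\abs S\cdot\abs{Z(S)}$ for every $\ph$, so all full diagonals are fully normalized, and $\ph\times\id$ shows each is $(\cF\times\cF)$-conjugate to $\Delta(S,id)$; hence each occurs in $\alpha_{\Delta(S,id)}$ with coefficient $1$ by the defining property. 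Since $\Delta(S,\ph)\sim_{S\times S}\Delta(S,\psi)$ iff $\psi\in\Inn(S)\ph$, the full-diagonal orbits are indexed by $\Out_\cF(S)$, giving $\abs{\Lambda_\cF}/\abs S\equiv\abs{\Out_\cF(S)}\pmod p$. By the saturation axiom $\Inn(S)\in\Syl_p(\Aut_\cF(S))$, whence $\abs{\Out_\cF(S)}$ is prime to $p$ and the third property holds.

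For minimality, let $\Omega$ be any $\cF$-characteristic biset. Properties \ref{itemF-generated} and \ref{itemF-stable} make $\Omega$ an $(\cF\times\cF)$-stable $(S\times S)$-set all of whose stabilizers are twisted diagonals, so Proposition \ref{propDefiningAlphas} gives a unique decomposition $\Omega=\sum_R a_R\,\alpha_R$ with $a_R\in\ZZ_{\geq0}$, and $a_R=0$ unless $R$ is a twisted diagonal. As $\Delta(S,id)$ has the largest order among twisted diagonals, only $\alpha_{\Delta(S,id)}$ can contribute a full-diagonal orbit, so the count above forces $c_{\Delta(S,\ph)}(\Omega)=a_{\Delta(S,id)}$ for every $\ph$; the same mod-$p$ reduction applied to $\Omega$ together with $p\nmid\abs\Omega/\abs S$ then shows at least one full-diagonal orbit occurs, so $a_{\Delta(S,id)}\geq1$. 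Since the $\alpha_R$ are genuine $(\cF\times\cF)$-stable sets, $\Omega$ contains $a_{\Delta(S,id)}\,\alpha_{\Delta(S,id)}\supseteq\Lambda_\cF$, which is the minimality statement. The main obstacle is the third property and its reuse in the minimality step: isolating which orbits survive mod $p$, verifying that all full diagonals are fully normalized and mutually $(\cF\times\cF)$-conjugate, and recognizing the surviving count as $\abs{\Out_\cF(S)}$, which is prime to $p$ \emph{exactly} because $\cF$ is saturated. The remaining verifications amount to bookkeeping of twisted diagonals under morphisms of $\cF\times\cF$.
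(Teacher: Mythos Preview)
The paper does not supply a proof of this theorem: it is quoted verbatim as a result of \cite{GelvinReeh}*{Theorem 5.3 and Corollary 5.4}, and the paper moves on immediately to use it. So there is no ``paper's own proof'' to compare against.

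That said, your argument is correct and is essentially the argument one finds in \cite{GelvinReeh}. A couple of remarks. First, the claim ``$a_R=0$ unless $R$ is a twisted diagonal'' deserves one more sentence: if $a_R>0$ then $\alpha_R$ contributes an orbit with stabilizer $R'$ fully normalized and $(\cF\times\cF)$-conjugate to $R$ (Proposition~\ref{propDefiningAlphas}), and since all stabilizers in $\Omega$ are twisted diagonals and this class is closed under $(\cF\times\cF)$-conjugacy, $R$ itself must be a twisted diagonal. Second, your mod-$p$ step for minimality actually gives the slightly stronger conclusion $p\nmid a_{\Delta(S,id)}$, not merely $a_{\Delta(S,id)}\geq 1$; either suffices. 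Finally, the identification of the $S\times S$-conjugacy classes of full diagonals with $\Out_\cF(S)$ uses that $\Inn(S)\trianglelefteq \Aut(S)$ so that $\Inn(S)\cdot\ph\cdot\Inn(S)=\Inn(S)\ph$, which you implicitly invoke; it is worth making that explicit.
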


In order to apply Theorem \ref{thmOrbitsBrokenChains} to study $\Lambda_\cF$ we need to figure out what broken chains look like in the context of bisets and the fusion system $\cF\x \cF$.

In a product fusion system the conjugation is defined coordinatewise. Hence two twisted diagonals $\Delta(P,\ph)$ and $\Delta(P',\ph')$ are conjugate in $\cF\x \cF$ if and only if there are additional isomorphisms $\psi,\rho\in \cF$ such that $\ph'=\psi\circ \ph\circ \rho^{-1}$. Consequently, every $\Delta(P,\ph)$ with $\ph\in \cF(P,S)$ is conjugate to $\Delta(P,id)$ which is conjugate to $\Delta(P',id)$ for all $P'\sim_\cF P$. In addition the subgroups of $S\x S$ that are subconjugate to $\Delta(S,id)$ in $\cF\x \cF$ are precisely all the twisted diagonals $\Delta(P,\ph)$ with $\ph\in \cF(P,S)$ and $P\leq S$.
To study $\Lambda_\cF=\alpha_{\Delta(S,id)}$ we therefore have to consider broken chains where all the groups are twisted diagonals coming from maps in $\cF$.

Two twisted diagonals satisfy $\Delta(Q,\psi)\leq \Delta(P,\ph)$ exactly when $\ph$ extends $\psi$, i.e. $Q\leq P$ and $\psi=\ph|_Q$. Every $(\cF\x \cF)$-conjugacy class of twisted diagonals contains a fully normalized representative on the form $\Delta(P^*,id)$ where $P^*$ is fully $\cF$-normalized, suppose for Theorem \ref{thmOrbitsBrokenChains} that we have chosen such at fully normalized representative $\Delta(P^*,id)$ for each conjugacy class.
The broken chains that we consider are chains of inclusions connected by $(\cF\x\cF)$-conjugations.
\begin{itemize}
\item Every chain of inclusions $\Delta(P_1,\ph_1)\leq \dotsb\leq \Delta(P_k,\ph_k)$ is a sequence of extensions with $\ph_i= \ph_k|_{P_i}$.
\item Every chain (except for the $0$'th chain) starts with a diagonal of the form $\Delta(P^*,id)$ where $P^*$ a fully normalized representative for the $\cF$-conjugacy class.
\end{itemize}
With this insight we can now apply Theorem \ref{thmOrbitsBrokenChains} and relate $\Lambda_\cF$ to the largest normal subgroup in $\cF$. Here normality is in the sense of \cite{AKO}*{Definition 4.3} where $P\leq S$ is normal in $\cF$ if every homomorphism $\ph\in \cF(Q,R)$ extends to some $\tilde\ph \in \cF(QP,RP)$ with $\tilde\ph(P)=P$. For each fusion system $\cF$ there is a largest normal subgroup, denoted $O_p(\cF)$.

\begin{prop}[\cite{GelvinReeh}*{Proposition 9.11}]\label{proApplication}
Let $\cF$ be a saturated fusion system on a finite $p$-group $S$, and let $\Lambda_\cF$ be the minimal characteristic biset for $\cF$. Denote by $O_p(\cF)$ the largest normal subgroup of $\cF$.
Then for each point $\omega\in \Lambda_\cF$ the stabilizer $Stab_{S\x S}(\omega)=\Delta(P,\ph)$ satisfies $P\geq O_p(\cF)$.
\end{prop}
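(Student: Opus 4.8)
The plan is to reduce the statement to a signed count of broken chains and then exhibit a sign-reversing involution driven by the normal subgroup $D := O_p(\cF)$. First I would record that, by Proposition \ref{propDefiningAlphas}, every point stabilizer of $\Lambda_\cF = \alpha_{\Delta(S,\id)}$ is $(\cF\x\cF)$-subconjugate to $\Delta(S,\id)$, hence (as noted above the statement) is a twisted diagonal $\Delta(P,\ph)$ with $\ph\in\cF(P,S)$, and its $(\cF\x\cF)$-class has a fully normalized representative of the form $\Delta(P^*,\id)$ with $P^*$ fully $\cF$-normalized. Since $\Lambda_\cF$ is an actual biset, a subgroup occurs as a stabilizer exactly when its orbit has positive coefficient, so it suffices to show
\[ c_{\Delta(P,\ph)}(\Lambda_\cF) = 0 \quad\text{whenever } D \not\leq P. \]
By Theorem \ref{thmOrbitsBrokenChains} and the nonvanishing of the prefactor $|W_{S\x S}\Delta(S,\id)|/|W_{S\x S}\Delta(P,\ph)|$, this is equivalent to the vanishing of $\sum_{\sigma\in\BC_{\cF\x\cF}(\Delta(P,\ph),\Delta(S,\id))}(-1)^{\ell(\sigma)}$; as explained above, every group occurring in such a chain is a twisted diagonal.

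The key structural observation I would establish next is a monotonicity property. Because $D$ is normal in $\cF$ it is strongly closed, so any isomorphism in $\cF$ whose domain contains $D$ fixes $D$ setwise. Reading a broken chain of twisted diagonals through its second-coordinate domains, inclusions only enlarge the domain and each $(\cF\x\cF)$-conjugacy break replaces a domain by an $\cF$-conjugate one; hence the condition ``$D\leq$ domain'' is non-decreasing along the chain and, crucially, is \emph{constant} across every break. Since this condition fails at the bottom group $\Delta(P,\ph)$ (as $D\not\leq P$) and holds at the top $\Delta(S,\id)$, each chain has a unique transition, which must be an inclusion edge $\Delta(A,\rho) < \Delta(B,\rho')$ lying inside a single subchain, with $D\not\leq A$ and $D\leq B$.

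With this in hand I would define the pairing by toggling, at the transition, the intermediate twisted diagonal $\Delta(AD,\rho'|_{AD})$: here $AD$ is the unique smallest overgroup of $A$ containing $D$, so it is canonically determined and satisfies $A < AD \leq B$. If this node is absent it is inserted just above $A$, and if it is present it is deleted; either operation changes the total length $\ell(\sigma)$ by exactly $1$ (it alters a single subchain by one link) and preserves the transition, so the two paired chains carry opposite signs and cancel. This is an involution on the broken chains for which the toggle is unobstructed, and it kills their contribution to the sum.

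The hard part will be the boundary behaviour of this toggle. The construction above runs into trouble precisely when $D$ first appears at the top of a subchain, that is when $AD = B$ and $B$ is the final group of its subchain (including the extreme case $AD=S$, the terminal group $\Delta(S,\id)$), since then deleting the node would destroy the break condition $a^i_{n_i}\sim_\cF a^{i+1}_0$ or the requirement that the next subchain begin at a chosen representative. My plan for these residual chains is to cancel them by a secondary pairing localized at that break, in the spirit of the type-$1$/type-$2$ cancellation of Proposition \ref{propNoStars}: first use Proposition \ref{propNoStars} to discard sparkling chains, and then match each surviving boundary chain, in which the transition sits at a subchain top $AD$ followed by a break to $\Delta((AD)^*,\id)$, with the chain in which the subchain instead terminates at $A$ and the enlargement by $D$ is performed after the break. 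Making this matching well-defined, reconciling the two fully normalized representatives $(AD)^*$ and $A^*$ and tracking the induced twisting of the morphisms, is the main technical obstacle; once it is in place, together with the interior involution it shows the entire signed sum is zero, giving $c_{\Delta(P,\ph)}(\Lambda_\cF)=0$ and hence $P\geq O_p(\cF)$ for every stabilizer.
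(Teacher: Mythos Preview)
Your overall strategy matches the paper's almost exactly: reduce to the vanishing of the broken-chain sum, locate the first appearance of $D=O_p(\cF)$ along the chain, and toggle the intermediate node $\Delta(AD,\ph|_{AD})$. The paper's primary involution is precisely your insert/delete of this node, and the residual set it leaves is exactly your boundary case $AD=B$ at the top of a subchain. Your secondary pairing---move the break so that the enlargement by $D$ happens just after the break rather than just before---is also the paper's Type~A/Type~B cancellation.

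One point to correct: do not invoke Proposition~\ref{propNoStars} here. That cancellation is keyed to $*$-groups and is independent of the $D$-toggle; layering two unrelated sign-reversing involutions generally fails because a single chain may need to be cancelled against two different partners (this is exactly the phenomenon flagged in the Remark following Proposition~\ref{propElmAbQuotients}). The paper does \emph{not} use Proposition~\ref{propNoStars} in this proof. Instead it resolves the technical obstacle you correctly identify---making the secondary matching well-defined and involutive while reconciling the representatives and the twisting morphisms---by fixing once and for all, for each $Q\sim_\cF Q^*$, an $\cF$-isomorphism $\chi_Q\colon Q\to Q^*$ and, for each $\psi\in\cF(Q,S)$, an extension $\tilde\psi\colon QD\to \psi(Q)D$ with $\tilde\psi|_Q=\psi$ and $\tilde\psi(D)=D$ (normality of $D$ guarantees this exists). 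With these choices in hand, the Type~A/Type~B correspondence becomes an honest involution changing $\ell$ by $1$, and the proof concludes. If you drop the reference to Proposition~\ref{propNoStars} and implement the secondary pairing via such fixed choices, your argument coincides with the paper's.
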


The original proof in \cite{GelvinReeh} is quite involved. In contrast the proof below, using broken chains, is actually quite elementary once you have the idea of pairing broken chains of opposite sign together.

\begin{proof}
Let $\Delta(R,\rho)$ with $\rho\in \cF(R,S)$  be such that $R$ does not contain $O_p(\cF)$. We then wish to show that $c_{\Delta(R,\rho)}(\Lambda_\cF)=0$. Because $\Lambda_\cF=\alpha_{\Delta(S,id)}$, we can apply Theorem \ref{thmOrbitsBrokenChains} and consider all $(\cF\x \cF)$-broken chains linking $\Delta(R,\rho)$ with $\Delta(S,id)$:
\begin{equation}\label{eqCoeffOfMinimalCharBiset}
c_{\Delta(R,\rho)}(\Lambda_\cF) = \sum_{\sigma\in \BC_{\cF\x \cF}(\Delta(R,\rho),\Delta(S,id))} (-1)^{\ell(\sigma)}.
\end{equation}
We will then show that all these broken chains cancel in pairs of two broken chains with opposite signs.

Consider a broken chain $\sigma\in \BC_{\cF\x \cF}(\Delta(R,\rho),\Delta(S,id))$. Since $S$ contains $O_p(\cF)$ and $R$ does not, there is a first twisted diagonal $\Delta(P,\ph)$ in $\sigma$ with $P\geq O_p(\cF)$, and necessarily $P> R$. Note that $\Delta(P,\ph)$ cannot be in the beginning of any chain in $\sigma$, since normality of $O_p(\cF)$ implies that the end of the previous chain would also contain $O_p(\cF)$.

Let $\Delta(Q,\psi)$ be the twisted diagonal coming just before $\Delta(P,\ph)$ in $\sigma$.
Because $O_p(\cF)$ is normal in $\cF$, hence also in $S$ and $P$, the product $Q\cdot O_p(\cF)$ is a well-defined subgroup of $P$. If we restrict $\ph$ to $Q O_p(\cF)$ we then have inclusions
\[\Delta(Q,\psi)< \Delta(Q O_p(\cF),\ph|\,) \leq \Delta(P,\ph).\]
If $P\neq Q O_p(\cF)$, then the broken chain $\sigma$ looks like
\[\begin{tikzpicture}[ampersand replacement=\&,baseline=(M-2-3.base)]
\matrix (M) [matrix of math nodes, column sep=.5cm, row sep=.5cm] {
\ddots \&[-.5cm] \&\&[-.5cm] \\[-.8cm]
\& \Delta(Q,\psi) \& \Delta(P,\ph) \& \\[-.8cm]
\&\& \& \ddots \\
};
\path[sloped]
(M-2-2) -- node{$<$} (M-2-3)
;
\end{tikzpicture}
,
\]
and we can add $\Delta(QO_p(\cF),\ph|\,)$ in the middle to make the broken chain one step longer.
Conversely, if $P=QO_p(\cF)$, and if $\Delta(P,\ph)=\Delta(QO_p(\cF), \varphi)$ is not at the end of a chain in $\sigma$, then $\sigma$ looks like
\[
\begin{tikzpicture}[ampersand replacement=\&,baseline=(M-2-3.base)]
\matrix (M) [matrix of math nodes, column sep=.5cm, row sep=.5cm] {
\ddots \&[-.5cm] \&\&\&[-.5cm] \\[-.8cm]
\& \Delta(Q,\psi) \& \Delta(QO_p(\cF),\ph) \& \Delta(T,\eta) \& \\[-.8cm]
\&\& \&\& \ddots \\
};
\path[sloped]
(M-2-2) -- node{$<$} (M-2-3) -- node{$<$} (M-2-4)
;
\end{tikzpicture}
,
\]
and we can remove $\Delta(P,\ph)=\Delta(QO_p(\cF),\ph)$ to make the broken chain one step shorter. These two constructions are inverse to each other, hence the broken chains with $P\neq Q O_p(\cF)$ are paired with the broken chains where $P=Q O_p(\cF)$ and $\Delta(P,\ph)$ is not at the end of a chain, and the pairing is such that the total length changes by $1$. Hence these broken chains cancel each other in \eqref{eqCoeffOfMinimalCharBiset}, and we are left with the broken chains $\sigma$ where $\Delta(P,\ph)=\Delta(QO_p(\cF),\ph)$ is at the end of a chain in $\sigma$.

All the remaining broken chains look like
\[
\begin{tikzpicture}[ampersand replacement=\&,baseline=(M-2-3.base)]
\matrix (M) [matrix of math nodes, column sep=.5cm, row sep=.5cm] {
\ddots \&[-.5cm] \&\&\&[-.5cm] \\[-.8cm]
\& \Delta(Q,\psi) \& \Delta(QO_p(\cF),\ph) \&\& \\
\& \& \Delta(T,\eta)  \&\dotsb\&\\[-.8cm]
\&\& \&\& \ddots \\
};
\path[sloped]
(M-2-2) -- node{$<$} (M-2-3) -- node{$\sim$} (M-3-3) -- node{$<$} (M-3-4)
;
\end{tikzpicture}
\]
Let $\sigma_i$ be the chain in $\sigma$ that contains the segment $\Delta(Q,\psi)<\Delta(QO_p(\cF),\ph)$. We divide the remaining broken chains into two types: Those broken chains where $\sigma_i$ consists only of $\Delta(Q,\psi)<\Delta(QO_p(\cF),\ph)$ and has $i\geq 1$; we call these Type A. The remaining broken chains form Type B, i.e. the broken chains where the chain $\sigma_i$ contains twisted diagonals before $\Delta(Q,\psi)$, or where $i=0$. We will finish the proof by cancelling broken chains of Type A with those of Type B and vice versa.

For each possible choice of $Q$, there is a chosen representative $Q^*\sim_\cF Q$ such that $\Delta(Q^*,id)$ is fully normalized in the $(\cF\x \cF)$-conjugacy class of $\Delta(Q,id)$ and $\Delta(Q,\psi)$ for all $\psi\in \cF(Q,S)$. For each $Q\sim_\cF Q^*$, we choose a particular $\cF$-isomorphism $\chi_Q\colon Q\to Q^*$. For each $Q\sim_\cF Q^*$ and each homomorphism $\psi\in \cF(Q,S)$, we also make the choice of an extension $\tilde\psi \colon QO_p(\cF) \to \psi(Q)O_p(\cF)$ in $\cF$ such that $\tilde\psi|_Q=\psi$ and $\tilde\psi(O_p(\cF))=O_p(\cF)$. In particular, we have isomorphisms $\tilde\chi_Q\colon QO_p(\cF)\to Q^*O_p(\cF)$.

If $\sigma$ is Type A, then by the definition of broken chains we must have $\Delta(Q,\psi) =\Delta(Q^*,id)$  because it is the start of the chain $\sigma_i$ and $i\geq 1$. Hence $\sigma$ looks like
\[\tag{\text{Type A}}
\begin{tikzpicture}[ampersand replacement=\&,baseline=(M-3-3.base)]
\matrix (M) [matrix of math nodes, column sep=.5cm, row sep=.5cm] {
\ddots \&[-.5cm]  \&\&\&[-.5cm] \\[-.8cm]
\& \Delta(Q',\psi') \&\&\& \\
\&  \Delta(Q^*,id) \& \Delta(Q^*O_p(\cF),\ph) \&\& \\
\&  \& \Delta(T,\eta)  \&\dotsb\&\\[-.8cm]
\& \&\&\& \ddots \\
};
\path[sloped]
(M-2-2) -- node{$\sim$} (M-3-2) -- node{$<$} (M-3-3) -- node{$\sim$} (M-4-3) -- node{$<$} (M-4-4)
;
\end{tikzpicture}
\]
We pair this with the following chain of Type B and a total length that has decreased by $1$:
\[\tag{\text{Type B}}
\begin{tikzpicture}[ampersand replacement=\&,baseline=(M-2-3.base)]
\matrix (M) [matrix of math nodes, column sep=.5cm, row sep=.5cm] {
\ddots \&[-.5cm]  \&\&\&[-.5cm] \\[-.8cm]
\& \Delta(Q',\psi') \& \Delta(Q'O_p(\cF),\tilde\psi' \circ (\tilde\chi_{Q'})^{-1}\circ \ph \circ \tilde\chi_{Q'}) \&\& \\
\& \&  \Delta(T,\eta)  \&\dotsb\&\\[-.8cm]
\&\& \&\& \ddots \\
};
\path[sloped]
(M-2-2)  -- node{$<$} (M-2-3) -- node{$\sim$} (M-3-3) -- node{$<$} (M-3-4)
;
\end{tikzpicture}
\]
Conversely, if $\sigma$ is Type B, then it has the shape
\[\tag{\text{Type B}}
\begin{tikzpicture}[ampersand replacement=\&,baseline=(M-2-3.base)]
\matrix (M) [matrix of math nodes, column sep=.5cm, row sep=.5cm] {
\ddots \&[-.5cm]  \&\&\&[-.5cm] \\[-.8cm]
\& \Delta(Q,\psi) \& \Delta(QO_p(\cF),\ph) \&\& \\
\& \&  \Delta(T,\eta)  \&\dotsb\&\\[-.8cm]
\&\& \&\& \ddots \\
};
\path[sloped]
(M-2-2)  -- node{$<$} (M-2-3) -- node{$\sim$} (M-3-3) -- node{$<$} (M-3-4)
;
\end{tikzpicture}
\]
and we can split $\sigma_i$ into two chains, thereby increasing the total length by $1$:
\[\tag{\text{Type A}}
\begin{tikzpicture}[ampersand replacement=\&,baseline=(M-3-3.base)]
\matrix (M) [matrix of math nodes, column sep=.5cm, row sep=.5cm] {
\ddots \&[-.5cm]  \&\&\&[-.5cm] \\[-.8cm]
\& \Delta(Q,\psi) \&  \&\& \\
\& \Delta(Q^*,id) \& \Delta(Q^*O_p(\cF),\tilde\chi_{Q}\circ(\tilde\psi)^{-1}\circ\ph\circ(\tilde\chi_Q)^{-1}) \&\& \\
\& \&  \Delta(T,\eta)  \&\dotsb\&\\[-.8cm]
\&\& \&\& \ddots  \\
};
\path[sloped]
(M-2-2) -- node{$\sim$} (M-3-2) -- node{$<$} (M-3-3) -- node{$\sim$} (M-4-3) -- node{$<$} (M-4-4)
;
\end{tikzpicture}\]
When we split $\sigma_i$ this way, $\Delta(Q,\psi)$ is still in a chain of length at least $1$ if $i\geq 1$, and if $i=0$, then $\Delta(Q,\psi)$ is allowed to form a trivial chain by itself.

This completes the proof as all the remaining broken chains of Type A cancel in \eqref{eqCoeffOfMinimalCharBiset} with all those of Type B.
\end{proof}

\makeatletter
\def\eprint#1{\@eprint#1 }
\def\@eprint #1:#2 {%
    \ifthenelse{\equal{#1}{arXiv}}%
        {\href{http://front.math.ucdavis.edu/#2}{arXiv:#2}}%
        {\href{#1:#2}{#1:#2}}%
}
\makeatother

\begin{bibdiv}
\begin{biblist}
\bibselect{bibliography}
\end{biblist}
\end{bibdiv}

\end{document}